\date{}
\newcommand{\A}{\mathcal{A}}
\newcommand{\h}{\mathcal{H}}
\newcommand{\rtt}{\rightthreetimes}
\def\o{\omega}
\def\O{\Omega}
\def\te{\theta}
\def\Te{\Theta}
\def\T{\mathbb{T}}
\def\Z{\mathbb{Z}}
\def\h{\mathcal H}
\def\Z{\mathbb Z}
\def\e{{\sf e}}
\def\d{{\rm d}}
\def\bu{\bullet}
\def\({\left(}
\def\[{\left[}
\def\){\right)}
\def\]{\right]}
\def\si{\sigma}
\def\Si{\Sigma}
\def\G{{\sf G}}
\def\p{\parallel}
\def\<{\langle}
\def\>{\rangle}
\providecommand{\norm}[1]{\lVert#1\rVert}
 \newtheorem{thm}{Theorem}[section]
 \newtheorem{cor}[thm]{Corollary}
 \newtheorem{lem}[thm]{Lemma}
 \newtheorem{prop}[thm]{Proposition}
 \theoremstyle{definition}
 \newtheorem{defn}[thm]{Definition}
 \theoremstyle{remark}
 \newtheorem{rem}[thm]{Remark}
 \newtheorem{ex}[thm]{Example}
 \numberwithin{equation}{section}
\numberwithin{equation}{section}
\begin{document}


\title{Symmetry for algebras associated to Fell bundles over groups and groupoids}

\author{Felipe Flores, Diego Jaur\'e and Marius M\u antoiu
\footnote{
\textbf{2020 Mathematics Subject Classification:} Primary 43A20, 20L05, Secondary 47L65,\,47L30.
\newline
\textbf{Key Words:} Fell bundle, groupoid, partial group action, symmetric Banach algebra, cocycle, weight. 
}
}



\maketitle


\begin{abstract}
To every Fell bundle $\mathscr C$ over a locally compact group $\G$ one associates a Banach $^*$-algebra $L^1(G\,\vert\,\mathscr C)$\,. We prove that it is symmetric whenever $\G$ with the discrete topology is rigidly symmetric. This generalizes the known case of a global action without a twist. There is also a weighted version as well as a treatment of some classes of associated integral kernels. We also deal with the case of Fell bundles over discrete groupoids. We formulate a generalization of rigid symmetry in this case and show its equivalence with an a priori stronger concept. We also study the symmetry of transformation groupoids and some permanence properties.
\end{abstract}

\section{Introduction}\label{introduction}

This article treats the symmetry of certain Banach $^*$-algebras connected with Fell bundles over discrete groupoids and locally compact groups (always supposed to be Hausdorff). Let us recall the main classical concept: 

\begin{defn} \label{symmetric}
A Banach $^*$-algebra $\mathfrak B$ is called {\it symmetric} if the spectrum of $b^*b$ is positive for every $b\in\mathfrak B$ (this happens if and only if the spectrum of any self-adjoint element is real.)
\end{defn}

The symmetry of a Banach $^*$-algebra admits many reformulations and has many useful and interesting consequences \cite{Pa2,Gr}, which will not be discussed here. Let us only mention that, in a suitable framework, it is connected with notions as spectral invariance and stability under the holomorphic functional calculus. We also feel that this is not the right place to sketch a history of the subject. A very useful and readable presentation may be found in \cite{Gr}. More general $^*$-algebras are also studied from the point of view of symmetry, but this will no longer be mentioned here.

\smallskip
A great deal of effort has been dedicated to Banach algebras associated to a locally compact group $\G$\,. The basic example is the convolution algebra $L^1(\G)$\,; actually the interest in the symmetry property arouse around the Banach algebra interpretation and treatment of the classical result \cite{Wie} of Wiener on Fourier series. But there are increasingly general other classes, as global crossed products, partial crossed products (both twisted by $2$-cocycles or not), groupoid algebras and $L^1$-types algebras associated to Fell bundles. All of these played an important theoretical role and lead to many applications. When looking for results and examples, one aims to enlarge the collection of groups of groupoids that can be treated, as well as the class of symmetric Banach $^*$-algebras assigned to them. {\it The present paper is concerned with these purposes, adopting the very general point of view of Fell bundles.}

\smallskip
Until very recently, the largest class for which general results have been found was that of crossed products attached to a global action of the group over a $C^*$-algebra. A cohomological twist has also been permitted. The simplest case of a trivial action leads to the projective tensor product between $L^1(\G)$ and a $C^*$-algebra. Some references are: \cite{LP,Bas,Kur,Po,Pa2,GL1,GL2,Ba,FGL,Gr,BB,Ma,FL}. Groupoid algebras are much less studied in the setup of symmetry and spectral invariance; a project on this topic started in \cite{AO}.

\smallskip
The main body of the article is composed of two parts: Section \ref{maininsultat} deals with Fell bundles over discrete groupoids, while Section \ref{flaconir} treats Fell bundles over locally compact groups. These are significant extensions in different directions of a result from \cite{JM}. Although connected by terminology and some of the techniques, the reader could read Section \ref{maininsultat} and Section \ref{flaconir} (and \cite{JM})  separately. We provide a description of the two sections. But first it is convenient to use the following terminology; the points (i) are (ii) are classical notions. (An adaptation for groupoids  will follow below.)

\begin{defn} \label{rigidlysymmetric}
\begin{enumerate}
\item[(i)]
The locally compact group $\G$ is called {\it symmetric} if the convolution Banach $^*$-algebra $L^1(\G)$ is symmetric.
\item[(ii)]
The locally compact group $\G$ is called {\it rigidly symmetric} if given any $C^*$-algebra $\A$\,, the projective tensor product $L^1(\G)\otimes\A$ is symmetric.
\item[(iii)]
The locally compact group $\G$ is called {\it hypersymmetric} if for every Fell bundle $\mathscr C$ over $\G$\,, the Banach $^*$-algebra $L^1(\G\,\vert\,\mathscr C)$ is symmetric.
\end{enumerate}
\end{defn}

The main result of \cite{JM}, stated in the different but equivalent language of graded algebras, says that {\it rigid symmetry and} (the seemingly much stronger) {\it hypersymmetry are equivalent for discrete groups}. This was supported by many interesting examples in which the group and the Fell bundle structure were not even visible.

\subsection{The case of discrete groupoids}\label{maininsult}

This section aims mainly at extending the mentioned result from \cite{JM} to Fell bundles over discrete groupoids. In this case, even the concept of rigid symmetry must be suitably adapted. Unfortunately, the symmetry issue in a groupoid setting is in a very incipient form (see \cite{AO}, however), so classes of rigidly symmetric groupoids still have to be found, which will then insure the natural and general hypersymmetry, by Theorem \ref{principala}.
As far as we know, the idea to embed more complicated algebras attached to discrete groups into simpler ones (projective tensor products of the usual $L^1$ group algebra with $C^*$-algebras) originated in \cite{FGL}, and it has been used in other papers. Here we adapt it to the framework of Fell bundles over discrete groupoids, which seems to be the most general object for which such technique is viable. Anyhow, this covers directly many natural examples, as transformation groupoids assigned to discrete group actions on discrete spaces and various pull-backs and equivalence relations in a discrete setting. Theorem \ref{teoremix} makes available the result for discrete groups in a form that will be useful in the next section.  

\smallskip
Going beyond the discrete groupoids (of a general form) seems difficult, since in this case the integration/disintegration theory for Fell bundle representations becomes very intricate, cf. sections 3,4,5 in \cite{MW}. This is why an analog of Theorem \ref{theoremix} for locally compact groupoids is still out of reach, and in Section \ref{flaconir} we restrict to groups.

\smallskip
In subsection \ref{mitanik} we provide an application to actions of discrete groupoids on $C^*$-bundles. In the Abelian case, one gets a result about the symmetry of transformation groupoids (the action space is no longer discrete).

\smallskip
Finally, in subsection \ref{minimor} we study morphisms.  In Theorem \ref{acidboroch} and Corollary \ref{acidboricoci} it is shown that subgroupoids of symmetric groupoids are also symmetric (and similarly for hypersymmetry). This should be compared with Proposition 4.2 from \cite{AO}. Theorem \ref{acidborich} shows that symmetry transfers from a discrete groupoid to any of its epimorphic image.

\subsection{The case of locally compact groups}\label{flaconrir}

The initial motivation of \cite{JM} and of the present article was the fact that partial group actions are very general and that many of the interesting examples are not coming from a global action. It soon became clear that Fell bundles, which are even more general, can also be treated in a similar way. In \cite{JM} the case of a discrete group was considered. Here, using ideas of Poguntke \cite{Po}, we treat Fell bundles over a locally compact group $\G$\,, but for technical reasons we need to impose conditions on the same group $\G^{\rm dis}$ with the discrete topology.

\smallskip
A diagram with implications and equivalences is supposed to systematize the actual state of art. If $\G$ is a locally compact group, we will denote by $\G^{\rm dis}$ the same group with the discrete topology. 
$$
\begin{tikzcd}
[arrows=Rightarrow]
    \G\ {\rm symmetric}
    & \G\ {\rm rigidly\ symmetric}
    \arrow{l}{I_2}
    & \G\ {\rm hypersymmetric}
    \arrow{l}{I_7}
    \\
    \G^{\rm dis}\ {\rm symmetric}
    \arrow{u}{I_3}
    & \G^{\rm dis}\ {\rm rigidly\ symmetric} 
    \arrow{l}{I_1}
    \arrow{u}{I_4}
    \arrow[shorten <= 6pt,shorten >= 6pt]{ur}{I_6}
    & \G^{\rm dis}\ {\rm hypersymmetric}
    \arrow[Leftrightarrow]{l}{I_5}
    \arrow{u}{I_8}
\end{tikzcd}
$$

Clearly, a rigidly symmetric group is symmetric, which is expressed by the two horizontal arrows $I_1$ and $I_2$\,. It is still not known if the two notions are really different.

\smallskip
The implications $I_3$ and $I_4$ are due to Poguntke \cite{Po}. 

\smallskip
In $I_5$ one direction is trivial, since projective tensor products of the form $L^1(\G)\otimes\A$ are easily written as $L^1$-algebras of some Fell bundle (a similar statement holds for $I_7$)\,. The fact that, for discrete groups, rigid symmetry implies hypersymmetry is one of the main results of \cite{JM}.

\smallskip
{\it The main results of Section \ref{flaconir} are the (equivalent) implications $I_6$ and $I_8$.}

\smallskip
Replacing the implication $I_7$ by an equivalence would certainly be considered a nice result. Neither arrow $I_3$ nor arrow $I_4$ can be reversed: Any compact connected semisimple real Lie group is rigidly symmetric, but in \cite{BG} it is shown that it contains a (dense) free group on two elements. The discretization will not be amenable; by \cite{SW} it cannot be symmetric. 

\smallskip
In \cite{FL} there is a global action of the 'ax+b' group, whose associated $L^1$-algebra is not symmetric. This provides an example of a symmetric but not hypersymmetric group. But this does not say which of the implications $I_2$ or $I_7$ cannot be reverted.

\smallskip
Since in $I_6$ one starts with rigid symmetry of the discretization, it is relevant to have examples. Classes of rigidly symmetric discrete groups are (cf. \cite{LP}): (a) Abelian, (b) finite, (c) finite extensions of discrete nilpotent. This last class includes all the finitely generated groups with polynomial growth. A central extension of a rigidly symmetric group is rigidly symmetric, by \cite[Thm.\,7]{LP}. In \cite[Cor.\,2.16]{Ma} it is shown that the quotient of a discrete rigidly symmetric group by a normal subgroup is rigidly symmetric.

\smallskip
In Section \ref{flocinor} we recall briefly the terminology of Fell bundles and state the main results. The implication $I_6$ is formulated in Theorem \ref{theoremix}. As a consequence of \cite[Thm.\,1]{Ku}, if $\G$ is supposed only symmetric, but the $C^*$-algebra $\A$ is type $I$, the projective tensor product $\ell^1(\G)\otimes\A$ is symmetric. This allows us to state and prove Theorem \ref{theoremix1} which cannot be obtained directly from the implication $I_6$. The passage from $I_5$ to $I_6$ is stated in Theorem \ref{cruciala}, which is the main technical result.

\smallskip
The proof of this crucial Theorem \ref{cruciala} is done in Sections \ref{pispis} and \ref{proof}, following ideas of Poguntke \cite{Po} (see also \cite{BB1,FL}). It makes use of a criterion for symmetry of Banach $^*$-algebras in terms of representations and a discretization procedure. The fact that the $L^1$-sections of a Fell bundle do not take values in a single Banach space is a source of complications. 

\smallskip
A {\it twisted partial action} $(\theta,\o)$ of a locally compact group $\G$ by partial isomorphisms between ideals of a $C^*$-algebra $\A$ gives rise to a {\it twisted crossed product} $\A\rtt_\te^\o\G$ and to a Fell bundle \cite{Ex1,Ab}. It is even true that if a Fell bundle is separable and the unit fibre is stable, there is always a twisted partial action $(\G,\theta,\o,\A)$ around. In Section \ref{goam} we sketch the constructions and write down the symmetry result in this case, which contains other important families (global actions, trivial cocycles).

\smallskip
In Section \ref{flocinyor} we present a symmetry result involving weights.

\smallskip
In connection with a given Fell bundle, in Section \ref{pishpish} we introduce a class of kernel-sections that are convolution dominated with respect to a weight. Those which are covariant, in a suitable way, form a symmetric Banach $^*$-algebra. To get a better result, for a larger algebra, we define an enlarged Fell bundle, canonically associated to the initial one.

\section{The case of discrete groupoids}\label{maininsultat}

\subsection{Fell bundles over discrete groupoids and symmetry issues}\label{mainresult}

Let $\Xi$ be a discrete groupoid, with unit spce $\Xi^{(0)}\!=:\!U$\,, source map ${\rm s}$ and range map ${\rm r}$\,. The set of composable pairs is 
$$\Xi^{(2)}\!:=\{(x,y)\!\mid\!{\rm r}(y)={\rm s}(x)\}\,.
$$ 

We are going to work with {\it Fell bundles} $\mathscr C\overset{q}{\to}\Xi$ over the groupoid ${\Xi}$ (see \cite {Yam,Kum,MW,TWG}). Without spelling out the whole definition in detail, let us recall that each fibre $\mathfrak C _x\!:=q^{-1}(\{x\})$ is a Banach space with norm $\p\!\cdot\!\p_{\mathfrak C_x}$\,, the topology of $\mathscr C$ coincides with the norm topology on each fibre, there are antilinear continuous involutions 
$$
\mathfrak C _x\ni \!a\to a^{\bu}\!\in\mathfrak C _{x^{-1}}
$$ 
and for all $(x,y)\in\Xi^{(2)}$ there are continuous multiplications 
$$
\mathfrak C _x\!\times\!\mathfrak C _y\ni(a,b)\to a\bu b\in\mathfrak C _{xy}
$$ 
satisfying the following axioms for $a\in\mathfrak C_x\,,b\in\mathfrak C_y\,,(x,y)\in\Xi^{(2)}$\,: 
\begin{itemize}
\item $\p\!ab\!\p_{\mathfrak C_{xy}}\,\le\,\p\!a\!\p_{\mathfrak C_{x}}\p\!b\!\p_{\mathfrak C_{y}}$\,,
\item $(ab)^\bu=b^\bu a^\bu$,
\item $\p\!a^\bu a\!\p_{\mathfrak C_{{\rm s}(x)}}=\,\p\!a\!\p_{\mathfrak C_{x}}^2$\,,
 \item $a^\bu a$ is positive in $\mathfrak C_{{\rm s}(x)}$\,.
 \end{itemize}
From these axioms it follows that $\mathfrak C _u$ is a $C^*$-algebra for every unit $u\in U$\,. Sometimes we simply write $\mathscr C=\bigsqcup_{x\in\Xi}\mathfrak C _x$ for the Fell bundle. 

\smallskip
The most important object for us is {\it the Hahn algebra} $\ell^{\infty,1}(\Xi\!\mid\!\mathscr C)$ adapted to Fell bundles  \cite{MW}, which in our discrete case if formed by the sections $\Phi:\Xi\to\mathfrak C$ (thus satisfying $\Phi(x)\in\mathfrak C_x$ for every $x\in\Xi$) such that {\it the Hahn-type norm} 
\begin{equation}\label{parts}
\p\!\Phi\p_{\infty,1}\,:=\max\Big\{\sup_{u\in U}\sum_{{\rm r}(x)=u}\!\p\!\Phi(x)\!\p_{\mathfrak C_x}\,,\,\sup_{u\in U}\sum_{{\rm s}(x)=u}\!\p\!\Phi(x)\!\p_{\mathfrak C_x}\!\Big\}
\end{equation}
is finite. It is a Banach $^*$-algebra under the multiplication
\begin{equation*}\label{tiplication}
(\Phi\star \Psi)(x):=\sum_{yz=x}\Phi(y)\bullet\Psi\big(z)
\end{equation*}
and the involution
\begin{equation*}\label{tion}
\Phi^\star(x):=\Phi\big(x^{-1}\big)^\bu.
\end{equation*}
The space $C_{\rm c}(\Xi\!\mid\!\mathscr C)$ of finitely-supported sections forms a dense $^*$-algebra of the Hahn algebra. The complexity of the multiplication, largely responsable for the generality of the emerging algebras, comes both from the complexity of the 'inner' Fell multiplication $\bu$ and from the groupoid-type convolution inherent to the formula. 

\smallskip
We need some special Fell bundles associated to Hilbert bundles $\mathscr H\!:=\bigsqcup_{u\in U}\h_u$ over the unit space; here the fact that $\Xi$ is discrete will be crucial. For $u,v\in U$ we set $\mathbb B(\h_u,\h_v)\equiv\mathbb B(u,v)$ for the Banach space of all bounded linear operators $A:\h_u\to\h_v$\,. Taking advantage of the norm, the obvious  multiplications $$
\mathbb B(\h_w,\h_v)\!\times\!\mathbb B(\h_u,\h_w)\to\mathbb B(\h_u,\h_v)
$$ 
and the obvious  involutions $\mathbb B(\h_u,\h_v)\to\mathbb B(\h_v,\h_u)$\,, one constructs the Fell bundle 
$$
\mathbb B^\mathscr H\!:=\!\!\bigsqcup_{(u,v)\in U\times U}\!\mathbb B(u,v)\to U\!\times\!U
$$ 
over the pair groupoid. Actually we are interested in the pull-back Fell bundle 
$$
\mathscr B^\mathscr H\!:=({\rm s},{\rm r})^*\big(\mathbb B^\mathscr H\big)
$$ 
of $\mathbb B$  through the groupoid morphism $({\rm s},{\rm r}):\Xi\to U\!\times\!U$. We denote it by 
$$
\mathscr B^\mathscr H\!:=\!\bigsqcup_{x\in\Xi}\mathfrak B_x\overset{p}{\to}\Xi\,,
$$ 
with fibres $\mathfrak B_x\!:=\mathbb B\big({\rm s}(x),{\rm r}(x)\big)$ and obvious structure.

\begin{defn} \label{groupsymmetric}
\begin{enumerate}
\item[(i)]
The discrete groupoid $\Xi$ is called {\it symmetric} if the convolution Banach $^*$-algebra $\ell^{\infty,1}(\Xi)$ is symmetric.
\item[(ii)]
The discrete groupoid $\Xi$ is called {\it rigidly symmetric} if, given any Hilbert bundle $\mathscr H\!=\bigsqcup_{x\in X}\h_x$ over its unit space, the Banach $^*$-algebra $\ell^{\infty,1}\big(\Xi\,\big\vert\,\mathscr B^\mathscr H\big)$ is symmetric.
\item[(iii)]
The discrete groupoid $\Xi$ is called {\it hypersymmetric} if given any Fell bundle $\mathscr C\!=\bigsqcup_{x\in\Xi}\mathfrak C _x$\,, the Banach $^*$-algebra $\ell^{\infty,1}(\Xi\!\mid\!\mathscr C)$ is symmetric.
\end{enumerate}
\end{defn}

It is clear that rigid symmetry implies symmetry; just take the constant field $\h_u\!:=\mathbb C$ for every $u$\,. As said in the Introduction, even for discrete groups it is still unknown if the two notions are identical.

\begin{thm}\label{principala}
For a discrete groupoid, rigid symmetry and hypersymmetry coincide.
\end{thm}

\begin{proof}
The fact that hypersymmetry implies rigid symmetry is obvious, since $\mathscr B^\mathscr H$ is a particular type of discrete groupoid Fell bundle. So we need to show the converse implication. 

\smallskip
We will first prove a result concerning the existence of isometric representations of such Fell bundles. We rely on the integrated form of such a representation. In general the connections between representations and their integrated forms is an intricate issue (see \cite[Sect. 3,4,5]{MW} for instance), especially at the level of `disintegration`, but for discrete $\Xi$ this simplifies a lot. We will sketch the constructions we need without saying how they fit the general case. But just a hint: the counting measure on $U$ is invariant with respect to the Haar system composed of counting measures on the fibers of the discrete groupoid. 

Let $\mathscr C=\bigsqcup_{x\in\Xi}\mathfrak C _x\overset{q}{\to}\Xi$\, be an arbitrary Fell bundle and
$$
\pi:\mathscr C\to\mathscr B^\mathscr H\!=\bigsqcup_{x\in\Xi}\mathbb B\big(\h_{{\rm s}(x)},\h_{{\rm r}(x)}\big)
$$ 
be a representation, where $\mathscr H=\bigsqcup_{u\in U}\h _u$ is a Hilbert bundle over $U$\,. In our context this just means that $\pi$ is a morphism of Fell bundles. Its integrated form 
$$
\Pi:\ell^{\infty,1}(\Xi\!\mid\!\mathscr C)\to\mathbb B\Big(\bigoplus_{u\in U}\h_u\big)
$$
is defined by
\begin{equation}\label{surpr}
\big[\Pi(\Phi)h\big](u):=\!\sum_{{\rm r}(x)=u}\!\pi\big[\Phi(x)\big]h\big[{\rm s}(x)\big]\,,\quad\forall\,h\in\bigoplus_{u\in U}\h_u\,.
\end{equation}
For any unit $v$ we embed $\mathfrak C_v$ into $C_{\rm c}(\Xi\!\mid\!\mathscr C)\subset\ell^{\infty,1}(\Xi\!\mid\!\mathscr C)$\,, setting for each $a\in\mathfrak C_v$
$$
(\te_v a)(x):=a\ \ {\rm if}\ \ x=v\,,\quad(\te_v a)(x):=0_{\mathfrak C_x}\ \ {\rm if}\ \ x\ne v\,,
$$
and we get from \eqref{surpr}
$$
\big[\Pi(\te_v a)h\big](u)=\pi(a)h(v)\ \ {\rm if}\ \ u=v\,,\quad\big[\Pi(\te_v a)h\big](u)=0_{\h_u}\ \ {\rm if}\ \ u\ne v\,.
$$
It follows immediately that if $\Pi$ is injective, then the restriction of $\pi$ to $\mathfrak C_v$ is also injective, i.\,e. isometric. Actually $\Pi$ extends to the full $C^*$-algebra $C^*(\Xi\!\mid\!\mathscr C)$ of the Fell bundle, which contains densely $\ell^{\infty}(\Xi\!\mid\!\mathscr C)$\,. Injective representations of $C^*$-algebras do exist; we conclude that the Fell bundle representation $\pi$ is isometric on the $C^*$-algebras corresponding to the units. But then, by a standard argument, the isometry also propagates on all the Banach spaces of the Fell bundle: if $b\in\mathfrak C_x$ the axioms allow us to write
$$
\begin{aligned}
\p\!\pi(b)\!\p_{\mathfrak B_x}^2&=\,\p\!\pi(b)^*\pi(b)\!\p_{\mathfrak B_{{\rm s}(x)}}\\
&=\,\p\!\pi\big(b^\bu\bu b\big)\!\p_{\mathfrak B_{{\rm s}(x)}}\\
&=\,\p\!b^\bu\bu b\!\p_{\mathfrak C_{{\rm s}(x)}}\\
&=\,\p\!b\!\p_{\mathfrak C_x}^2.
\end{aligned}
$$
Now define 
$$
\Upsilon_\pi\!:\ell^{\infty,1}(\Xi\!\mid\!\mathscr C)\to\ell^{\infty,1}\big(\Xi\,\big\vert\,\mathscr B^\mathscr H\big)\,,\quad\big(\Upsilon_\pi(\Phi)\big)(x)\!:=\pi\big(\Phi(x)\big)\,.
$$
It is a well-defined linear isometry: we compute for the range part of the norm \eqref{parts} (using more detailed notations); the same is true for the source part.
$$
\begin{aligned}
\p\!\Upsilon_\pi(\Phi)\!\p^{{\rm r}}_{\ell^{\infty,1}(\Xi\,\vert\,\mathscr B^\mathscr H)}\,&=\sup_u\!\sum_{{\rm r}(x)=u}\!\p\!\pi\big(\Phi(x)\big)\!\p_{\mathfrak B_x}\\
&=\sup_u\!\sum_{{\rm r}(x)=u}\!\p\!\Phi(x)\!\p_{\mathfrak C_x}=\,\p\!\Phi\!\p^{{\rm r}}_{\ell^{\infty,1}(\Xi\mid\mathscr C)}.
\end{aligned}
$$
It is also an involutive morphism. For the multiplication, for instance, we have:
$$
\begin{aligned}
\big[\Upsilon_\pi(\Phi)\star\Upsilon_\pi(\Psi)\big](\Xi)&=\sum_{yz=x}\big[\Upsilon_\pi(\Phi)\big](y)\big[\Upsilon_\pi(\Psi)\big](z)\\
&=\sum_{yz=x}\pi[\Phi(y)]\pi[\Psi(z)]\\
&=\pi\Big(\sum_{yz=x}\Phi(y)\bu\Psi(z)\Big)\\
&=\big[\pi(\Phi\star\Psi)\big](x)\\
&=\big[\Upsilon_\pi(\Phi\star\Psi)\big](x)\,.
\end{aligned}
$$

Thus we proved that $\ell^{\infty,1}(\Xi\!\mid\!\mathscr C)$ can be embedded as a closed $^*$-algebra of the symmetric Banach $^*$-algebra $\ell^{\infty,1}\big(\Xi\,\big\vert\,\mathscr B^\mathscr H\big)$\,, so it is also symmetric, by \cite[Th.11.4.2]{Pa2}. The proof is finished.
\end{proof}

The next result, valid for discrete groups, has essentially been stated and proven in \cite{JM} in the language of graded algebras. Rephrasing it in the context of Fell bundles is straightforward. We re-obtain here the (most important) point (i) as a consequence of Theorem \ref{principala}.  It will be useful in the next sections.

\begin{thm}\label{teoremix}
Let $\mathscr D$ be a Fell bundle over the discrete group ${\sf H}$\,. Assume any of the hypothesis:
\begin{enumerate}
\item[(i)] ${\sf H}$ is rigidly symmetric (cf. Definition \ref{rigidlysymmetric}(ii)),
\item[(ii)] ${\sf H}$ is symmetric and $C^*\big({\sf H}\,\vert\,\mathscr D\big)$ is a type $I$ $C^*$-algebra.
\end{enumerate}
Then $\ell^1({\sf H}\,\vert\,\mathscr D)$ is a symmetric Banach $^*$-algebra. 
\end{thm}

\begin{proof}
(i)  If $\Xi\equiv{\sf H}$ is a discrete group with unit $\e$\,, since $U=\{\e\}$\,, the $\ell^{\infty,1}$-algebras reduce to the usual $\ell^1$-algebras associated to Fell bundles with discrete groups \cite{FD2}. The Hilbert bundle reduces to a single Hilbert space $\h_{\e}=:\h$\,, the Fell bundle $\mathscr B^\mathscr H$ is only composed of $\mathbb B(\h)$\, and then $\ell^{\infty,1}\big({\sf H}\,\big\vert\,\mathscr B^\mathscr H\big)$ becomes $\ell^1\big({\sf H},\mathbb B(\h)\big)$\,, which is isomorphic to the projective tensor product $\ell^1({\sf H})\!\otimes\!\mathbb B(\h)$\,. We recover in this case the classical notion of rigidly symmetric group, cf. \cite{Po} and Definition \ref{rigidlysymmetric}(ii). This explains a posteriori our terminology. In the same way 
$$
\ell^{\infty,1}({\sf H}\,\vert\,\mathscr D)=\ell^1({\sf H}\,\vert\,\mathscr D)\,,
$$ 
and then we apply Theorem \ref{principala}.
\end{proof}

\subsection{Actions of discrete groupoids on continuous fields of $C^*$-algebras}\label{mitanik}

\begin{defn}\label{groupoidaktion}
By {\it a left groupoid action} of our discrete groupoid $\Xi$ on the $C^*$-bundle $\mathscr A\!:=\bigsqcup_{u\in U}\mathfrak A_u\overset{p}{\to}U$ over its unit space  we understand
a continuous map 
$$
\mathscr A\rtimes\Xi:=\{(\alpha,x)\in\mathscr A\!\times\!\Xi\!\mid\!p(\alpha)={\rm s}(x)\}\ni(\alpha,x)\to \mathcal T(\alpha,x)\equiv\mathcal T_x(\alpha)\in\mathscr A,
$$ 
satifying the axioms
\begin{enumerate}
\item[(a)] $p\big[\mathcal T_x(\alpha)\big]={\rm r}(x)$\,, $\forall\,x\in\Xi\,,\,\alpha\in\mathfrak A_{{\sf s}(x)}$\,,
\item[(b)] each $\mathcal T_x$ is an isomorphism $:\mathfrak A_{{\rm s}(x)}\!\to\mathfrak A_{{\rm r}(x)}$\,,
\item[(c)] $\mathcal T_u={\rm id}_{\mathfrak A_u}$\,, $\forall\,u\in U$\,,
\item[(d)] if $(x,y)\in\Xi^{(2)}$ and $(\alpha,y)\in\mathfrak A\rtimes\Xi$\,, then $\big(\mathcal T_y(\alpha),x\big)\in\mathcal A\rtimes\Xi$ and $\mathcal T_{xy}(\alpha)=\mathcal T_x\big[\mathcal T_y(\alpha)\big]$.
\end{enumerate}
\end{defn}

We denote by $C_{{\rm c},\mathscr A}(\Xi)$ the vector space of all the finitely supported functions $F:\Xi\to\bigsqcup_{u\in U}\mathfrak A_u$ satisfying $F(x)\in\mathfrak A_{{\rm r}(x)}$ for every $x\in\Xi$\,. It becomes a $^*$-algebra with the laws
\begin{equation}\label{unaca}
(F\dagger G)(x):=\!\sum_{y\in\Xi^{{\rm r}(x)}}\!F(y)\mathcal T_y\big[G(y^{-1}x)\big]\,,
\end{equation}
\begin{equation}\label{binaca}
F^{\dagger}(x):=\mathcal T_{x}\big[F(x^{-1})\big]^*.
\end{equation}
Completing in the Hahn-type norm
\begin{equation}\label{trinaca}
\p\!F\!\p_{\ell^{\infty,1}_\mathscr A(\Xi)}:=\max\Big\{\sup_{u\in U}\sum_{{\rm r}(x)=u}\!\p\!F(x)\!\p_{\mathfrak A_u}\,,\,\sup_{u\in U}\sum_{{\rm s}(x)=u}\!\p\!F(x^{-1})\!\p_{\mathfrak A_u}\!\Big\}\,,
\end{equation}
one finds a Banach $^*$-algebra $\ell^{\infty,1}_\mathscr A(\Xi)$\,. Its enveloping $C^*$-algebra is known as {\it the crossed product associated to the action $(\mathscr A,p,\mathcal T,\Xi)$}\,, cf \cite{MW1} for example. 

\begin{cor}\label{cuaktiuni}
Let $(\mathscr A,p,\mathcal T,\Xi)$ be an action of the discrete rigidly symmetric groupoid $\Xi$ on the $C^*$-bundle $\mathscr A$. The Banach $^*$-algebra $\ell^{\infty,1}_\mathscr A(\Xi)$ is symmetric.
\end{cor}

\begin{proof}
The proof consists in putting the objects in the framework of Fell bundles over groupoids and apply Theorem \ref{principala}. We follow closely \cite{MW}.

\smallskip
We endow the bundle $\mathscr C:=\mathscr A\rtimes\Xi$ with the operations
$$
(\alpha,x)\bu(\beta,y):=\big(\alpha\mathcal T_x(\beta),xy\big)\,,
$$
$$
(\alpha,x)^\bu:=\big(\mathcal T_{x^{-1}}(\alpha^*),x^{-1}\big)
$$
and get a Fell bundle over $\Xi$\,. A section is now a map $\Phi:\Xi\to\mathscr A\rtimes\Xi$ such that 
$$
\Phi(x)\equiv\big(\varphi(x),x\big)\in\mathfrak C_x=\mathfrak A_{{\rm s}(x)}\!\times\!\{x\}\,,\quad\forall\,x\in\Xi\,.
$$
With such notations, on $\ell^{\infty,1}(\Xi\!\mid\!\mathscr C)$ the relevant mathematical structure is
$$
\begin{aligned}
(\Phi\star\Psi)(x)&=\sum_{yz=x}\big(\varphi(y),y\big)\bu\big(\psi(z),z\big)\\
&=\sum_{yz=x}\big(\varphi(y)\mathcal T_y[\psi(z)],yz\big)\\
&=\sum_{y\in\Xi^{{\rm r}(x)}}\big(\varphi(y)\mathcal T_y[\psi(y^{-1}x)],x\big)\,,
\end{aligned}
$$
$$
\Phi^\star(x)=\big(\varphi(x^{-1}),x^{-1}\big)^\bu=\big(\mathcal T_x\big[\varphi(x^{-1})^\ast\big],x\big)
$$
and
$$
\begin{aligned}
\p\!\Phi\!\p_{\ell^{\infty,1}(\Xi\mid\mathscr C)}=\max\Big\{&\sup_{u\in U}\sum_{{\rm r}(x)=u}\!\p\!(\varphi(x),x)\!\p_{\mathfrak A_{{\rm r}(x)}\times\{x\}}\,,\,\\
&\sup_{u\in U}\sum_{{\rm s}(x)=u}\!\p\!(\varphi(x^{-1}),x^{-1})\!\p_{\mathfrak A_{{\rm s}(x)}\times\{x\}}\!\Big\}\,.
\end{aligned}
$$
Comparing this with \eqref{unaca},\,\eqref{binaca} and \eqref{trinaca}, respectively, it becomes obvious that the Banach $^*$-algebras $\ell^{\infty,1}_\mathscr A(\Xi)$ and $\ell^{\infty,1}(\Xi\!\mid\!\mathscr C)$ are isomorphic. Applying Theorem \ref{principala} finishes the proof.
\end{proof}

\begin{ex}\label{stir}
By {\it a left groupoid action} of our discrete groupoid $\Xi$ on the Hausdorff locally compact space $\Si$ we understand  a pair $(\rho,\tau)$\,, where $\rho:\Si\to U=\Xi^{(0)}$ is continuous surjection, defining the topological subspace 
$$
\Si*\Xi\equiv\Si\rtimes_\tau^\rho\Xi:=\{(\si,x)\in\Si\!\times\!\Xi\!\mid\!\rho(\si)={\rm s}(x)\}\subset \Si\!\times\!\Xi\,,
$$
and a continuous map $\Si\rtimes_\tau^\rho\Xi\ni(\si,x)\to \tau(\si,x)\equiv\tau_x(\si)\in\Si$ satifying the axioms
\begin{enumerate}
\item[(a)] $\tau_{\rho(\si)}(\si)=\si\,,\ \forall\,\si\in\Si\,,$
\item[(b)] if $(x,y)\in\Xi^{(2)}$ and $(\si,y)\in\Si\rtimes_\tau^\rho\Xi$\,,\\ 
then $\big(\tau_y(\si),x\big)\in\Si\rtimes_\tau^\rho\Xi$ and $\tau_{xy}(\si)=\tau_x\big[\tau_y(\si)\big]$\,.
\end{enumerate}
To a groupoid action $(\Xi,\rho,\tau,\Si)$ one associates (cf.\;\cite{MW1}) {\it the transformation} (or {\it action} or {\it crossed product}) {\it groupoid}, with unit space which can be identified to $\Si$,  by endowing $\Si\rtimes_\tau^\rho\Xi$ with the structure maps
$$
\big(\tau_x(\si),y\big)(\si,x):=(\si,yx)\,,\quad(\si,x)^{-1}:=\big(\tau_x(\si),x^{-1}\big)\,,
$$
$$
{\mathfrak s}(\si,x):=(\si,\rho(\si))\equiv\si\,,\quad{\mathfrak r}(\si,x):=\big(\tau_x(\si),{\rm r}(x)\big)\equiv\tau_x(\si)\,.
$$
For each $u\in U$ one defines the open set $\Si_u:=\rho^{-1}(u)\subset\Si$ and then the Abelian $C^*$-algebra $\mathfrak A_u\!:=C_0(\Si_u)$\,. The homeomorphisms $\tau_{x^{-1}}:\Si_{{\rm r}(x)}\to\Si_{{\rm s}(x)}$ induce by composition isomorphisms $\mathcal T_x:\mathfrak A_{{\rm s}(x)}\to\mathfrak A_{{\rm r}(x)}$\,. One gets an action $\mathcal T$ of $\Xi$ on the $C^*$-bundle $\mathscr A=\bigsqcup_{u\in U}\mathfrak A_u$\,. It turns out that $\ell^{\infty,1}_\mathscr A(\Xi)$ has the Hahn algebra $L^{\infty,1}\big(\Si\rtimes^\rho_\tau\Xi\big)$ of the transformation groupoid $\Si\rtimes^\rho_\tau\Xi$ as a quotient. The next result, consequence of this and Corollary \ref{cuaktiuni}, converts the rigid symmetry of a discrete groupoid into the symmetry of any of its possible transformation groupoids (which may be much more complicated). Note that the space $\Si$ needs not be discrete.
\end{ex}

\begin{cor}\label{cuagtiuni}
Let $(\Xi,\rho,\tau,\Si)$ be an action of the discrete rigidly symmetric groupoid $\Xi$ on the locally compact space $\Si$\,. The transformation groupoid $\Si\rtimes_\tau^\rho\Xi$ is symmetric.
\end{cor}

\subsection{Behavior of symmetry with respect to  epimorphisms and monomorphisms}\label{minimor}

We will work with a {\it morphism of discrete groupoids} $j:\Xi\to\Xi'$, which are assumed at the beginning to have the same unit space $U$ (or that $j|_U:U\to U'$ is a bijection).

\begin{thm}\label{acidborich}
Let $j:\Xi\to\Xi'$ be an surjective morphism of discrete groupoids. If $\Xi$ is symmetric, then $\Xi'$ is symmetric.
\end{thm}

\begin{proof}
We define $\,\mathbb J:C_{\rm c}(\Xi)\to C_{\rm c}(\Xi')$ by
\begin{equation*}\label{haplea}
\big[\mathbb J(\Phi)\big](x'):=\!\sum_{j(x)=x'}\!\Phi(x)\,.
\end{equation*}
It is a linear map satisfying $\,{\rm supp}\big[\mathbb J(\Phi)\big]\subset j\big[{\rm supp}(\Phi)\big]$\,. Since $J$ is onto, $\mathbb J$ is also onto (check that if $\,{\rm supp}(\Phi')=\{x'_0\}$ then $\Phi'$ belongs to the range of $\mathbb J$).

\smallskip
The application $\mathbb J$ is multiplicative:
$$
\begin{aligned}
\big[\mathbb J(\Phi)\star\mathbb J(\Psi)\big](x')&=\sum_{x_1'x_2'=x'}\sum_{j(x_1)=x_1'}\!\!\Phi(x_1)\!\sum_{j(x_2)=x_2'}\!\!\Psi(x_2)\\
&=\sum_{j(x)=x'}\sum_{x_1x_2=x}\!\Phi(x_1)\Psi(x_2)\\
&=\sum_{j(x)=x'}(\Phi\star\Psi)(x)=\big[\mathbb J(\Phi\star\Psi)\big](x')\,.
\end{aligned}
$$

For the involution:
$$
\begin{aligned}
\big[\mathbb J(\Phi)\big]^\star(x')&=\overline{\big[\mathbb J(\Phi)\big]\big(x'^{-1}\big)}=\sum_{j(y)=x'^{-1}}\!\overline{\Phi(y)}\\
&=\sum_{j(x)=x'}\!\overline{\Phi(x^{-1})}=\sum_{j(x)=x'}\Phi^\star(x)=\mathbb J\big(\Phi^\star\big)(x')\,.
\end{aligned}
$$

\smallskip
We check now that $\mathbb J$ is contractive for the Hahn norms. We only treat the ${\rm r}$-part of the Hahn norm; the ${\rm s}$-part is similar:
$$
\begin{aligned}
\p\!\mathbb J(\Phi)\!\p^{\rm r}_{\ell^{\infty,1}(\Xi')}&=\sup_{u'\in U'}\sum_{{\rm r}'(x')=u'}\Big\vert \!\sum_{j(x)=x'}\!\Phi(x)\Big\vert\\
&\le\sup_{u'\in U'}\sum_{{\rm r}'(x')=u'}\!\sum_{\ j(x)=x'}\big\vert\Phi(x)\big\vert\\
&=\sup_{u\in U}\sum_{{\rm r}'(x')=j(u)}\!\sum_{\ j(x)=x'}\big\vert \Phi(x)\big\vert\\
&=\sup_{u\in U}\sum_{{\rm r}(x)=u}\!\vert\Phi(x)\vert\\
&=\,\p\!\Phi\!\p^{\rm r}_{\ell^{\infty,1}(\Xi)}.
\end{aligned}
$$
In the next-to-last step we used the identity ${\rm r}'\circ j=j\circ{\rm r}$ and also the injectivity of $j$\,, to make sure that $j[{\rm r}(x)]=j(u)$ is equivalent to ${\rm r}(x)=u$\,.

\smallskip
Now let $\Phi\in C_{\rm c}(\Xi)$\,, with (finite) support $F\subset\Xi$\,, and define $\Psi\in C_{\rm c}(\Xi)$ to be null if $F\cap j^{-1}[j(x)]=\emptyset$ and
\begin{equation*}\label{lamehuha}
\Psi(x):=\frac{(\mathbb J\Phi)[j(x)]}{\#\big(F\cap j^{-1}[j(x)]\big)}=\frac{\sum_{j(y)=j(x)}\Phi(y)}{\#\big(F\cap j^{-1}[j(x)]\big)}
\end{equation*}
if $F\cap j^{-1}[j(x)]\ne\emptyset$\,. A short computation shows that $\mathbb J(\Psi)=\mathbb J(\Phi)$\,. Additionally
\begin{equation*}\label{lamezuza}
\begin{aligned}
\p\!\Psi\!\p^{\rm r}_{\ell^{\infty,1}(\Xi)} &=\sup_{u\in U}\sum_{{\rm r}(x)=u}\frac{|(\mathbb J\Phi)[j(x)]|}{\#\big(F\cap j^{-1}[j(x)]\big)}\\
&=\sup_{u\in U}\sum_{{\rm r'}[j(x)]=u}\frac{|(\mathbb J\Phi)[j(x)]|}{\#\big(F\cap j^{-1}[j(x)]\big)}\\
&=\sup_{u\in U}\sum_{{\rm r'}(x')=u}|(\mathbb J\Phi)(x')|=\,\p\!\mathbb J(\Psi)\!\p_{\ell^{\infty,1}(\Xi')}
\end{aligned}
\end{equation*}
so
\begin{equation}\label{mameluza}
\p\!\Psi\!\p_{\ell^{\infty,1}(\Xi)}\,=\,\p\!\mathbb J(\Psi)\!\p_{\ell^{\infty,1}(\Xi')}\,=\,\p\!\mathbb J(\Phi)\!\p_{\ell^{\infty,1}(\Xi')}.
\end{equation}
By \eqref{mameluza} and the contractivity of $\mathbb J$\,, one concludes that
\begin{equation*}\label{labeluza}
\p\!\mathbb J(\Phi)\!\p_{\ell^{\infty,1}(\Xi')}=\min\big\{\!\p\!\Psi\!\p_{\ell^{\infty,1}(\Xi)}\,\big\vert\, \mathbb J(\Psi)=\mathbb J(\Phi)\big\}
\end{equation*}
(the quotient norm). It follows that $\mathbb J$ extends to a surjective contraction from $\ell^{\infty,1}(\Xi)$ to $\ell^{\infty,1}(\Xi')$\,, which is a $^*$-morphism as shown in the first part of the proof. Or, to put it differently, $\ell^{\infty,1}(\Xi')$ identifies as a Banach $^*$-algebra with the quotient $\ell^{\infty,1}(\Xi)/\ker\mathbb J$\,.

\smallskip
The statement about symmetry follows, since by \cite[Th.\,10.4.4]{Pa2} the quotient of a symmetric Banach $^*$-algebra through a closed bi-sided self-adjoint ideal is symmetric.
\end{proof}

We finish this section with a result concerning monomorphisms. In fact, in the injective case, it is possible to work with {\it morphism of Fell bundles} 
$$
(J,j):(\mathscr C,q,\Xi)\to\big(\mathscr C',q',\Xi'\big)
$$
over discrete groupoids. Thus $j:\Xi\to\Xi'$ is a groupoid morphism and $J:\mathscr C\to\mathscr C'$ is a continuous map such that $q'\circ J=j\circ q$\,. It is assumed that the induced applications 
$$
J_x:\mathfrak C_x\to\mathfrak C'_{j(x)}\,,\quad x\in\Xi
$$ 
are linear, one has 
$$
J_{xy}(a\bu b)=J_x(a)\bu J(b)\,,\quad\forall\,a\in\mathfrak C_x\,,b\in\mathfrak C_y\,,(x,y)\in\Xi^{(2)}
$$
and $J_{x^{-1}}\big(a^\bu\big)=J_x(a)^\bu$ if $a\in\mathfrak C_x$\,.

\smallskip
It follows that $J_u:\mathfrak C_u\to\mathfrak C'_{j(u)}$ are $C^*$-morphisms for all the units $u\in U$\,, thus they are contractions. Then for every $b\in\mathfrak C_x$\,, $x\in\Xi$\,, one gets
$$
\begin{aligned}
\p\!J_x(b)\!\p_{\mathfrak C'_{j(x)}}^2&=\,\p\!J_x(b)^\bu\bu J_x(b)\!\p_{\mathfrak C'_{{\rm s}'[j(x)]}}\\
&=\,\p\!J_{{\rm s}(x)}(b^\bu\bu b)\!\p_{\mathfrak C'_{j[{\rm s}(x)]}}\\
&\le\,\p\!b^\bu\bu b\!\p_{\mathfrak C_{{\rm s}(x)}}\\
&=\,\p\!b\!\p_{\mathfrak C_x}^2\,,
\end{aligned}
$$
therefore all the operators $J_x:\mathfrak C_x\to\mathfrak C'_{j(x)}$ are contractive. In addition, if $J$ is injective (this is equivalent to its injectivity on the algebras corresponding to units), they are all isometries.

\begin{thm}\label{acidboroch}
Let $(J,j):(\mathscr C,q,\Xi)\to\big(\mathscr C',q',\Xi'\big)$ be a monomorphism of Fell bundles.
 If $\ell^{\infty,1}\big(\Xi'\!\mid\!\mathscr C'\big)$ is symmetric, then $\ell^{\infty,1}(\Xi\!\mid\!\mathscr C)$ is symmetric.
\end{thm}

\begin{proof}
Let $\,\mathbf J:\ell^{\infty,1}(\Xi\!\mid\!\mathscr C)\to\ell^{\infty,1}\big(\Xi'\!\mid\!\mathscr C'\big)$\,, where $\big[\mathbf J(\Phi)\big](x'):=0_{\mathfrak C'_{x'}}$ if $x'$ does no belong to the range of $j$ and
$$
\big[\mathbf J(\Phi)\big](x'):=J_x\big[\Phi(x)\big]\quad{\rm if}\ \ \ j(x)=x'.
$$
It is straightforward to check that $\mathbf J$ is an isometric $^*$-morphism, so $\ell^{\infty,1}\big(\Xi\!\mid\!\mathscr C\,\big)$ embeds as a closed $^*$-subalgebra of $\ell^{\infty,1}\big(\Xi'\!\mid\!\mathscr C'\big)$\,. For the isometry, we need the remark made above, saying that $J$ is isometric if it is injective. The conclusion follows recalling that closed $^*$-subalgebras of symmetric Banach $^*$-algebras are themselves symmetric.
\end{proof}

This can be applied, for instance, if $\Xi=\Xi'$ and $\mathscr C$ is a Fell sub-bundle of $\mathscr C'$.

\begin{cor}\label{acidboricoci}
Suppose that $\Xi$ is a subgroupoid of the discrete groupoid $\Xi'$.
\begin{enumerate}
\item[(i)] If $\Xi'$ is symmetric, then $\Xi$ is symmetric.
\item[(ii)] If $\Xi'$ is hypersymmetric, then $\Xi$ is hypersymmetric.
\end{enumerate}
\end{cor}

\begin{proof}
(i) One gets the result about symmetry by taking the trivial Fell bundles $\mathscr C=\mathscr C':=\mathbb C\!\times\!\Xi$\,.

\smallskip
(ii) For hypersymmetry one starts with an arbitrary Fell bundle $\mathscr C$ over $\Xi$\,, extends it somehow over the entire $\Xi'$ (by zero fibers, for instance) and then applies Theorem \ref{acidboroch}.
\end{proof}

Corollary \ref{acidboricoci} can be applied to various interesting subgroupoids, as the isotropy groupoid for instance, or as the (maybe non-invariant) restriction 
$$
\Xi^M_M:=\{x\in\Xi\!\mid\!{\rm s}(x)\in M\,,{\rm r}(x)\in M\}
$$ 
with $M\subset U$\,. For $M=\{u\}$ one gets the isotropy group $\Xi_u^u$\,. We point out that in \cite[Prop.\,4.2]{AO} it is shown that the isotropy subgroups of a symmetric groupoid are symmetric, under some assumptions on the big groupoid (ample, with compact unit space and satisfying an extra technical condition). A twist is also considered.

\section{The case of locally compact groups}\label{flaconir}

\subsection{The main results for locally compact groups}\label{flocinor}

All over the remaining part of the article, $\G$ will be a (Hausdorff) locally compact group with unit $\e$ and left Haar measure $d\mu(x)\equiv dx$\,. 
We are also given a Fell bundle $\mathscr C\!=\bigsqcup_{x\in\G}\mathfrak C_x$ over $\G$\,.
Its sectional $L^1(\G\,\vert\,\mathscr C)$ Banach $^*$-algebra is the completion of the space $C_{\sf c}(\G\,\vert\,\mathscr C)$ of continuous sections with compact support. Its (universal) $C^*$-algebra its denoted by ${\rm C^*}(\G\,\vert\,\mathscr C)$\,. For the general theory of Fell bundles over groups we followed \cite[Ch.\,VIII]{FD2}, to which we refer for details. The case of a discrete group and associated graded $C^*$-algebras is developed in \cite{Ex3}. We only recall the product on $L^1(\G\,\vert\,\mathscr C)$
\begin{equation}\label{broduct}
\big(\Phi*\Psi\big)(x)=\int_\G \Phi(y)\bu \Psi(y^{-1}x)\,\d y
\end{equation}
and its involution
\begin{equation}\label{inwol}
\Phi^*(x)=\Delta(x^{-1})\,\Phi(x^{-1})^\bu\,,
\end{equation}
in terms of the operations $\big(\bu,^\bu\big)$ on the Fell bundle.

\smallskip
We are going to make use both of $\mathscr C$ and its discrete version; the following result is obvious.

\begin{lem}\label{memix}
Let $\mathscr C\!=\bigsqcup_{x\in\G}\mathfrak C_x$ be a Fell bundle over the locally compact group $\G\,$ and $\G^{\rm dis}$ the same group with the discrete topology\,. We denote by $\mathscr C^{\rm dis}$ the space $\mathscr C$ with the disjoint union topology defined by the fibres $\big\{\mathfrak C_x\,\big\vert\,x\in\G\big\}$ (a subset is open if and only if its intersection with every $\mathfrak C_x$ is open). Then $\mathscr C^{\rm dis}$ is a Fell bundle over $\G^{\rm dis}$, called {\rm the discretization of the initial one}. One has
\begin{equation*}\label{continix}
C_{\rm c}\big(\G^{\rm dis}\,\vert\,\mathscr C^{\rm dis}\big)=\big\{\Phi:\G\to\mathscr C\,\big\vert\,\Phi(x)\in\mathfrak C_x\,,\,\forall\,x\in\G\ \,{\rm and\,\ supp}(\Phi)\ {\rm is\ finite}\big\}\,,
\end{equation*}
\begin{equation*}\label{sumix}
\begin{aligned}
L^1\big(\G^{\rm dis}\,\vert\,\mathscr C^{\rm dis}\big)&\equiv\ell^1(\G\,\vert\,\mathscr C)\\
&=\big\{\Phi:\G\to\mathscr C\,\big\vert\,\Phi(x)\in\mathfrak C_x\,,\,\forall\,x\in\G\ \,{\rm and}\ \sum_{x\in\G}\!\p\!\Phi(x)\!\p\,<\infty\big\}\,,
\end{aligned}
\end{equation*}
\begin{equation*}\label{diferix}
C^*\big(\G^{\rm dis}\,\vert\,\mathscr C^{\rm dis}\big)= {\rm the\ enveloping}\ C^*{\rm -\,algebra\ of}\ \ell^1(\G\,\vert\,\mathscr C)\,.
\end{equation*}
\end{lem}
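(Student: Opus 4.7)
The statement is flagged as obvious, and the plan is simply to unwind the three standard constructions (topological Fell bundle, $C_{\rm c}$-sections, $L^1$-completion, enveloping $C^*$-algebra) in the particularly trivial situation where the base is discrete. I would organize the verification in three short steps, corresponding to the three displayed identities.

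First, I would check that $\mathscr C^{\rm dis}$ is indeed a Fell bundle over $\G^{\rm dis}$, relative to the Banach bundle axioms of \cite[Ch.\,VIII]{FD2}. The algebraic structure (fibrewise Banach space structure, multiplication $\mathfrak C_x\times\mathfrak C_y\to\mathfrak C_{xy}$, involution $\mathfrak C_x\to\mathfrak C_{x^{-1}}$, $C^*$-type norm identity on $\mathfrak C_\e$) is inherited unchanged from $\mathscr C$. Only the topological conditions require comment. Since every fibre $\mathfrak C_x$ is clopen in $\mathscr C^{\rm dis}$, the projection onto $\G^{\rm dis}$ is continuous and open, and the norm $\p\!\cdot\!\p:\mathscr C^{\rm dis}\to[0,\infty)$ is continuous because its restriction to each clopen piece $\mathfrak C_x$ is so by the Fell bundle hypothesis on $\mathscr C$. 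Continuity of multiplication on $\mathscr C^{\rm dis}\times\mathscr C^{\rm dis}$ and of the involution on $\mathscr C^{\rm dis}$ likewise reduces to continuity on each product $\mathfrak C_x\times\mathfrak C_y$ resp.\ each $\mathfrak C_x$, which is part of the Fell bundle axioms for $\mathscr C$.

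Second, I would describe $C_{\rm c}(\G^{\rm dis}\,\vert\,\mathscr C^{\rm dis})$. Because $\G^{\rm dis}$ is discrete, compact subsets are exactly finite subsets, so the support condition translates to finite support. For a section $\Phi:\G\to\mathscr C^{\rm dis}$ with $\Phi(x)\in\mathfrak C_x$, continuity is automatic, as the base $\G^{\rm dis}$ is discrete and the preimage of any open set in $\mathscr C^{\rm dis}$ meets each fibre in an open set, hence its preimage in $\G^{\rm dis}$ is trivially open. The description of $C_{\rm c}(\G^{\rm dis}\,\vert\,\mathscr C^{\rm dis})$ follows immediately.

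Third, I would identify the $L^1$-norm. The left Haar measure of $\G^{\rm dis}$ is the counting measure, hence by the very definition of $L^1$ on a Fell bundle
\begin{equation*}
\p\Phi\p_1=\int_{\G^{\rm dis}}\!\p\Phi(x)\p\,\d\mu^{\rm dis}(x)=\sum_{x\in\G}\p\Phi(x)\p,
\end{equation*}
and the completion of $C_{\rm c}(\G^{\rm dis}\,\vert\,\mathscr C^{\rm dis})$ in this norm is the stated $\ell^1$-direct sum. The last identity is a pure definition: $C^*(\G^{\rm dis}\,\vert\,\mathscr C^{\rm dis})$ is by construction the enveloping $C^*$-algebra of $L^1(\G^{\rm dis}\,\vert\,\mathscr C^{\rm dis})=\ell^1(\G\,\vert\,\mathscr C)$. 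There is no serious obstacle; the only mild point worth mentioning is the verification of continuity of the operations on $\mathscr C^{\rm dis}$, which is handled as above by reducing to the fibrewise statements already available for $\mathscr C$.
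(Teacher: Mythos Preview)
Your proposal is correct and matches the paper's approach: the paper states that the result is obvious and gives no proof at all, so your careful unwinding of the definitions (Fell bundle axioms fibrewise, compact $=$ finite in the discrete base, Haar measure $=$ counting measure, and $C^*$ as the enveloping algebra by definition) simply makes explicit what the authors leave to the reader.
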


The critical technical result is the next theorem. It will be proven in Section \ref{proof}, on the lines of \cite{Po,FL}, where only particular cases of Fell bundles have been treated. Some technical complications which have to be overcome are due to the fact that our $L^1$-sections do not take values in a single Banach space.

\begin{thm}\label{cruciala}
Let $\mathscr C$ be a Fell bundle over the locally compact group $\G\,$.  
If $\ell^1(\G\,\vert\,\mathscr C)$ is symmetric, then $L^1(\G\,\vert\,\mathscr C)$ is also symmetric.
\end{thm}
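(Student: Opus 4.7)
The plan is to combine a representation-theoretic reformulation of symmetry with a Poguntke-style discretization of $\G$. Recall the standard criterion (due to Pták and Hulanicki) that a Banach $^*$-algebra $\mathfrak B$ is symmetric if and only if for every non-degenerate $^*$-representation $\pi:\mathfrak B\to B(\mathcal H)$ and every self-adjoint $b\in\mathfrak B$ one has $\|\pi(b)\|\le r_{\mathfrak B}(b)$. It therefore suffices to fix such a representation $\pi$ of $L^1(\G\,\vert\,\mathscr C)$ and a self-adjoint $\Phi\in L^1(\G\,\vert\,\mathscr C)$, and to prove $\|\pi(\Phi)\|\le r_{L^1}(\Phi)$. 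By disintegration for Fell bundles, $\pi$ arises from a $^*$-representation $T=(T_x)_{x\in\G}$ of $\mathscr C$, with $\pi(\Psi)=\int_\G T_x(\Psi(x))\,dx$; the very same family $T$ also integrates to a $^*$-representation $\tilde\pi$ of the discretized algebra $\ell^1(\G\,\vert\,\mathscr C)$ via $\tilde\pi(\Psi)=\sum_{x\in\G}T_x(\Psi(x))$. The assumed symmetry of $\ell^1(\G\,\vert\,\mathscr C)$ then supplies, by the same criterion, the bound $\|\tilde\pi(\Psi)\|\le r_{\ell^1}(\Psi)$ for every self-adjoint $\Psi\in\ell^1(\G\,\vert\,\mathscr C)$.

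The heart of the argument is a discretization step. To each sufficiently fine Borel partition $\mathcal P=\{V_i\}_{i\in I}$ of $\G$ with distinguished points $x_i\in V_i$ I would associate a self-adjoint element $\Phi^{\mathcal P}\in\ell^1(\G\,\vert\,\mathscr C)$ supported on $\{x_i\}$ and designed so that
(i) $\|\tilde\pi(\Phi^{\mathcal P})-\pi(\Phi)\|\to 0$ as the mesh of $\mathcal P$ shrinks, and
(ii) $\limsup_{\mathcal P} r_{\ell^1}(\Phi^{\mathcal P})\le r_{L^1}(\Phi)$.
Granted (i) and (ii), the symmetry of $\ell^1(\G\,\vert\,\mathscr C)$ yields $\|\tilde\pi(\Phi^{\mathcal P})\|\le r_{\ell^1}(\Phi^{\mathcal P})$, and passing to the limit produces the required inequality $\|\pi(\Phi)\|\le r_{L^1}(\Phi)$. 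To construct $\Phi^{\mathcal P}$ one must transport the values $\Phi(y)\in\mathfrak C_y$ for $y\in V_i$ into the single fibre $\mathfrak C_{x_i}$; this can be done via the Fell-bundle multiplication $\mathfrak C_{x_i y^{-1}}\times\mathfrak C_y\to\mathfrak C_{x_i}$ paired with an element of $\mathfrak C_{x_i y^{-1}}$ close to an approximate unit of the unit fibre $\mathfrak C_\e$ (available when $x_iy^{-1}$ is close to $\e$ by continuity of the bundle), followed by integration over $V_i$. Self-adjointness of $\Phi^{\mathcal P}$ is a matter of symmetrising the partition or of replacing $\Phi^{\mathcal P}$ by $\tfrac12\bigl(\Phi^{\mathcal P}+(\Phi^{\mathcal P})^*\bigr)$.

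The principal obstacle is (ii): the $L^1$-spectral radius of $\Phi$ must dominate the $\ell^1$-spectral radii of objects living in a different algebra. The standard device is to bound $\|(\Phi^{\mathcal P})^{*n}\|_{\ell^1}$ by a sum of norms of convolution products indexed over tuples $(i_1,\dots,i_n)\in I^n$ and to arrange, by refining $\mathcal P$, that these sums approximate $\|\Phi^{*n}\|_{L^1}$ well enough for the $n$-th root limit on the $\ell^1$-side not to exceed $r_{L^1}(\Phi)$. In the Fell-bundle setting this bookkeeping has to be carried out uniformly in $n$, with explicit control of the errors coming from the continuity of the Fell-bundle product on compact subsets of $\mathscr C$, because iterated convolutions take values in a moving family of fibres rather than in a single Banach space. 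This is exactly the \emph{source of complications} flagged in the introduction; once it is handled, the remainder follows the template of \cite{Po,BB1,FL}.
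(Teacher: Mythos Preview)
Your opening criterion is stated the wrong way round, and this undermines the whole scheme. For any $^*$-representation $\pi$ of a Banach $^*$-algebra $\mathfrak B$ and any self-adjoint $b$, the inequality $\|\pi(b)\|\le r_{\mathfrak B}(b)$ holds \emph{automatically}: $\pi(b)$ is self-adjoint in $B(\mathcal H)$, so $\|\pi(b)\|=r_{B(\mathcal H)}(\pi(b))$, and homomorphisms shrink spectra. Hence the condition you aim for is satisfied by every Banach $^*$-algebra, symmetric or not. The correct Raikov--Pt\'ak characterization is the reverse inequality $r_{\mathfrak B}(b)\le\sup_\pi\|\pi(b)\|$ for self-adjoint $b$. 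With the arrow reversed, however, your approximation runs in the wrong direction: you would need $r_{L^1}(\Phi)\le\liminf_{\mathcal P} r_{\ell^1}(\Phi^{\mathcal P})$, and discretization gives no such lower bound on spectral radii. Even your step (ii) as written --- bounding $r_{\ell^1}(\Phi^{\mathcal P})$ above by $r_{L^1}(\Phi)$ uniformly as $\mathcal P$ refines --- is only asserted; controlling $\|(\Phi^{\mathcal P})^{*n}\|_{\ell^1}$ by $\|\Phi^{*n}\|_{L^1}$ with errors that survive the $n$-th root is precisely the kind of uniform-in-$n$ estimate that does not follow from local continuity of the bundle operations.

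The paper avoids spectral radii entirely and uses Leptin's criterion instead: $\mathfrak B$ is symmetric if and only if every non-trivial algebraically irreducible representation is \emph{preunitary}. Given such a $\Gamma:L^1(\G\,\vert\,\mathscr C)\to\mathbb B(\mathcal E)$, one disintegrates it into a bundle representation $\gamma:\mathscr C\to\mathbb B(\mathcal E)$ and re-integrates \emph{discretely} to obtain $\Gamma^{\rm dis}:\ell^1(\G\,\vert\,\mathscr C)\to\mathbb B(\mathcal E)$ on the \emph{same} space. The technical heart (Proposition~\ref{A3}, via the Riemann-sum estimate of Lemma~\ref{ubiforti} and the surjectivity Lemma~\ref{A2}) is that $\Gamma^{\rm dis}$ remains algebraically irreducible. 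Symmetry of $\ell^1(\G\,\vert\,\mathscr C)$ then makes $\Gamma^{\rm dis}$ preunitary, yielding an injective $V:\mathcal E\to\mathcal H$ intertwining with a Hilbert-space $^*$-representation $\Pi^{\rm dis}$; disintegrating $\Pi^{\rm dis}$ and integrating back over $\G$ produces a $^*$-representation $\Pi$ of $L^1(\G\,\vert\,\mathscr C)$ with $V\Gamma(\Phi)=\Pi(\Phi)V$. So the discretization is performed on \emph{representations}, not on elements, and no comparison of spectral radii between $L^1$ and $\ell^1$ is ever required.
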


We state now two main results of the paper. They follow immediately from the two theorems above, where ${\sf H}=\G^{\rm dis}$ and $\mathscr D$ is the discretization of $\mathscr C$.

\begin{thm}\label{theoremix}
Let $\mathscr C$ be a Fell bundle  over the locally compact group $\G$\, for which the discrete group $\G^{\rm dis}$ is rigidly symmetric. Then $L^1(\G\,\vert\,\mathscr C)$ is a symmetric Banach $^*$-algebra.
\end{thm}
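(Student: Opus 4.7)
The plan is to obtain Theorem \ref{theoremix} as an immediate two-step composition of the two main technical inputs already in place, namely Theorem \ref{teoremix} and Theorem \ref{cruciala}. The bridge between them is the discretization construction of Lemma \ref{memix}, which identifies the $L^1$-algebra of the discretized bundle with the $\ell^1$-algebra of the original bundle.

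First I would form the discretization $\mathscr C^{\rm dis}$ of $\mathscr C$ over $\G^{\rm dis}$, as in Lemma \ref{memix}, and observe that $L^1(\G^{\rm dis}\,\vert\,\mathscr C^{\rm dis})=\ell^1(\G\,\vert\,\mathscr C)$ as Banach $^*$-algebras. By hypothesis, the discrete group ${\sf H}:=\G^{\rm dis}$ is rigidly symmetric, so the first bullet of Theorem \ref{teoremix} applies to $\mathscr D:=\mathscr C^{\rm dis}$ and yields that $\ell^1(\G\,\vert\,\mathscr C)$ is a symmetric Banach $^*$-algebra.

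Second, I would feed this conclusion into Theorem \ref{cruciala}: since $\ell^1(\G\,\vert\,\mathscr C)$ is symmetric, the Banach $^*$-algebra $L^1(\G\,\vert\,\mathscr C)$ is symmetric as well. This is exactly the claim of Theorem \ref{theoremix}.

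There is essentially no obstacle at this level: both the genuine discrete statement (rigid symmetry of ${\sf H}$ forces symmetry of $\ell^1({\sf H}\,\vert\,\mathscr D)$) and the passage from discrete to continuous (symmetry of $\ell^1$ forces symmetry of $L^1$) have been isolated as Theorems \ref{teoremix} and \ref{cruciala}. All the real work is hidden in those two results; the \emph{hard part} is the proof of Theorem \ref{cruciala}, which is postponed to Sections \ref{pispis} and \ref{proof} and requires a representation-theoretic symmetry criterion together with a discretization argument à la Poguntke adapted to the fact that $L^1$-sections of $\mathscr C$ take values in varying fibres $\mathfrak C_x$ rather than in a fixed Banach space.
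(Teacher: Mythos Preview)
Your proposal is correct and matches the paper's own argument exactly: the paper states that Theorem \ref{theoremix} follows immediately from Theorems \ref{teoremix} and \ref{cruciala} by taking ${\sf H}=\G^{\rm dis}$ and $\mathscr D=\mathscr C^{\rm dis}$, precisely as you outlined. Your additional remark that all the substantive work is contained in Theorem \ref{cruciala} (proved in Sections \ref{pispis} and \ref{proof}) is also accurate.
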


If one only knows that $\G^{\rm dis}$ is symmetric, there is still a result if we ask more on the $C^*$-algebraic side.

\begin{thm}\label{theoremix1}
Let $\mathscr C$ be a Fell bundle  over the locally compact group $\G$ for which the discrete group $\G^{\rm dis}$ is symmetric. Suppose that $C^*\big(\G^{\rm dis}\vert\mathscr C^{\rm dis}\big)$ is a type I $C^*$-algebra. Then $L^1(\G\,\vert\,\mathscr C)$ is symmetric. 
\end{thm}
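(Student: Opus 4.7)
The plan is straightforward: Theorem \ref{theoremix1} should follow by concatenating Theorem \ref{teoremix} (applied in its second, type~I alternative) with the crucial Theorem \ref{cruciala}, using Lemma \ref{memix} to identify the discretizations.

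First, I would set ${\sf H}:=\G^{\rm dis}$ and $\mathscr D:=\mathscr C^{\rm dis}$, the discretization of the Fell bundle $\mathscr C$ provided by Lemma \ref{memix}. By hypothesis, ${\sf H}$ is symmetric as a discrete group, and the enveloping $C^*$-algebra $C^*({\sf H}\,\vert\,\mathscr D)=C^*\!\big(\G^{\rm dis}\,\vert\,\mathscr C^{\rm dis}\big)$ is of type I. Thus the second alternative of Theorem \ref{teoremix} applies and yields the symmetry of $\ell^1({\sf H}\,\vert\,\mathscr D)$. By Lemma \ref{memix}, this is exactly the Banach $^*$-algebra $\ell^1(\G\,\vert\,\mathscr C)$, so we conclude that $\ell^1(\G\,\vert\,\mathscr C)$ is symmetric.

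Second, with $\ell^1(\G\,\vert\,\mathscr C)$ now known to be symmetric, I would invoke Theorem \ref{cruciala} directly: its hypothesis is precisely what we have just established, and its conclusion is the symmetry of $L^1(\G\,\vert\,\mathscr C)$, which is the statement to be proved.

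Since all of the hard work has been placed in Theorems \ref{teoremix} and \ref{cruciala}, there is no genuine obstacle here; the only thing to check carefully is that the discretization of $\mathscr C$ satisfies the required type~I hypothesis — but this is assumed verbatim in the statement, and the identification $L^1({\sf H}\,\vert\,\mathscr D)\equiv\ell^1(\G\,\vert\,\mathscr C)$ is explicitly recorded in Lemma \ref{memix}. The proof is therefore just a two-step application of the preceding results.
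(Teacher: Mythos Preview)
Your proposal is correct and is exactly the argument the paper gives: Theorems \ref{theoremix} and \ref{theoremix1} are stated to ``follow immediately from the two theorems above, where ${\sf H}=\G^{\rm dis}$ and $\mathscr D$ is the discretization of $\mathscr C$,'' i.e.\ one applies Theorem \ref{teoremix} (second alternative) to obtain the symmetry of $\ell^1(\G\,\vert\,\mathscr C)$ and then Theorem \ref{cruciala} to conclude.
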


\subsection{Discretization of representations}\label{pispis}

Our proof of Theorem \ref{cruciala}, to be found in the next section, relies on a discretization procedure in the setting of Fell bundles, that we now present. 

\smallskip
Let $\Gamma:L^1(\G\,\vert\,\mathscr C)\to\mathbb B(\mathcal E)$ be a non-trivial algebraically irreducible representation on the vector space $\mathcal E$. Even for general Banach algebras, it is known how to turn it into a Banach space representation. In our case, for $\xi_0\in\mathcal E$, the norm 
$$
\norm{\xi}_{\mathcal E}:=\inf\big\{\norm{\Phi}_{L^1(\G\,\vert\,\mathscr C)}\!\mid \Gamma(\Phi)\xi_0=\xi\big\}
$$ 
makes $\mathcal E$ a Banach space and $\Gamma$ a contractive representation. If $\mathcal E$ was already a Banach space, this new norm is equivalent to the previous one, so we lose nothing by assuming that this is the norm of $\mathcal E$. 

\smallskip
We will define a representation $\Gamma^{\rm dis}$ of $\ell^1\big(\G\,\vert\,\mathscr C\big)\equiv L^1\big(\G^{\rm dis}\,\vert\,\mathscr C^{\rm dis}\big)$ on $\mathcal E$, called its {\it discretization}. The procedure consists in disintegrating the initial representation, reinterpreting it in the discrete setting and integrating it back. The new one will act in the same space, but it will be very different. 

\smallskip
By algebraic irreducibility, we may write every $\xi\in\mathcal{E}$ as $\xi=\Gamma(\Psi)\xi_0$\,. Using this form, the representation $\Gamma$ induces bounded operators $\{\gamma(a)\}_{a\in \mathscr C}$\,, given by 
\begin{equation*}\label{srtass}
\gamma(a)\Big[\Gamma(\Psi)\xi_0\Big]:=\Gamma(a\cdot \Psi)\xi_0
\end{equation*}
where, in terms of the bundle projection $q:\mathscr C\to\G$\,, we set
\begin{equation*}\label{srtass1}
(a\cdot \Psi)(x):=a\bu\Psi\big(q(a)^{-1}x\big)\in\mathfrak C_x\,,\quad\forall\,x\in\G\,.
\end{equation*}
We are thinking of $a\in \mathscr C$ as a multiplier of $L^1(\G\,\vert\,\mathscr C)$\,, that we treat directly, without indicating references. So we provide a direct proof of the following result:

\begin{lem}\label{repr}
The map $\gamma:\mathscr C\to \mathbb B(\mathcal E)$ is a well-defined, contractive, Banach representation of the Fell Bundle $\mathscr C$ and does not depend on the choices.
\end{lem}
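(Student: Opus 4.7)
The core is a module identity that lets convolution by $\Phi\in L^1(\G\,\vert\,\mathscr C)$ absorb the multiplier action of $a\in\mathscr C$. Setting $s:=q(a)$ and $\tilde\Phi(z):=\Phi(zs^{-1})\bu a\in\mathfrak C_z$, a direct change of variables (using that $d(yt)=\Delta(t)\,dy$ for the left Haar measure) yields
\begin{equation}\label{modid}
\Phi*(a\cdot\Psi)\,=\,\Delta(s)^{-1}\,\tilde\Phi*\Psi\,.
\end{equation}
A parallel computation on the other side produces the companion formula $a\cdot(\Psi*\Phi)=(a\cdot\Psi)*\Phi$, this time without a modular factor, and these two relations drive the whole proof.

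\textbf{Well-definedness and contractivity.} If $\Gamma(\Psi)\xi_0=0$, then applying $\Gamma$ to \eqref{modid} gives $\Gamma(\Phi)\,\Gamma(a\cdot\Psi)\xi_0=0$ for every $\Phi\in L^1(\G\,\vert\,\mathscr C)$. Were $\Gamma(a\cdot\Psi)\xi_0$ nonzero, algebraic irreducibility would force $\Gamma(L^1(\G\,\vert\,\mathscr C))\Gamma(a\cdot\Psi)\xi_0=\mathcal E\neq 0$, contradicting the previous display; hence $\gamma(a)$ is a well-defined linear operator on $\mathcal E$. Contractivity follows from $\norm{(a\cdot\Psi)(x)}\leq\norm{a}\,\norm{\Psi(s^{-1}x)}$ and left-invariance of $dx$, which give $\norm{a\cdot\Psi}_1\leq\norm{a}\norm{\Psi}_1$; taking the infimum in the definition of $\norm{\cdot}_{\mathcal E}$ yields $\norm{\gamma(a)\xi}_{\mathcal E}\leq\norm{a}\norm{\xi}_{\mathcal E}$.

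\textbf{Banach representation and independence of $\xi_0$.} Linearity of $a\mapsto\gamma(a)$ on each fibre is immediate. The multiplicativity $\gamma(a\bu b)=\gamma(a)\gamma(b)$ reduces to the pointwise identity $(a\bu b)\cdot\Psi=a\cdot(b\cdot\Psi)$, which is a consequence of associativity of $\bu$ and $q(a\bu b)=q(a)q(b)$. Continuity in the strong operator topology is verified on the dense subspace $\Gamma(C_{\sf c}(\G\,\vert\,\mathscr C))\xi_0$, using continuity of $a\mapsto a\cdot\Psi$ in the $L^1$-norm for $\Psi\in C_{\sf c}(\G\,\vert\,\mathscr C)$ together with the uniform bound just proved. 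For independence of $\xi_0$, let $\xi_0'$ be a second nonzero base-point and write $\xi=\Gamma(\Psi)\xi_0=\Gamma(\Psi')\xi_0'$. Algebraic irreducibility yields $\Phi$ with $\Gamma(\Phi)\xi_0'=\xi_0$, so $\Gamma(\Psi*\Phi-\Psi')\xi_0'=0$; applying the well-definedness argument at $\xi_0'$ and invoking $a\cdot(\Psi*\Phi)=(a\cdot\Psi)*\Phi$ gives $\Gamma(a\cdot\Psi)\xi_0=\Gamma(a\cdot\Psi')\xi_0'$, as required.

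The main technical obstacle is the module identity \eqref{modid}: the modular-function bookkeeping is a routine but irritating exercise because the action $a\cdot\Psi$ implicitly translates the argument by $s=q(a)$. Once \eqref{modid} is in place, algebraic irreducibility combined with the algebraic axioms of a Fell bundle dictates the rest.
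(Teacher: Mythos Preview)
Your argument is correct, but the route to well-definedness differs from the paper's. The paper takes an approximate unit $\{\Upsilon_i\}$ of $L^1(\G\,\vert\,\mathscr C)$ and uses only the left identity $(a\cdot\Upsilon_i)*\Psi=a\cdot(\Upsilon_i*\Psi)$: from $\Gamma(\Psi)\xi_0=\Gamma(\Phi)\xi_1$ one gets $\Gamma\big(a\cdot[\Upsilon_i*\Psi]\big)\xi_0=\Gamma\big(a\cdot[\Upsilon_i*\Phi]\big)\xi_1$, and a limit in $i$ yields the claim directly, with no modular factor and with the two ``choices'' (of $\Psi$ and of the base point) handled in a single stroke. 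Your approach instead exploits algebraic irreducibility head-on via the right identity $\Phi*(a\cdot\Psi)=\Delta(s)^{-1}\tilde\Phi*\Psi$, which forces $\Gamma(a\cdot\Psi)\xi_0=0$ whenever $\Gamma(\Psi)\xi_0=0$, and then treats independence of $\xi_0$ separately using the companion identity $a\cdot(\Psi*\Phi)=(a\cdot\Psi)*\Phi$. Both work; the paper's version is a bit slicker because it avoids the $\Delta$-bookkeeping and the two-stage argument, while yours has the virtue of not invoking an approximate unit and of making the role of irreducibility more explicit. The remaining items (contractivity via $\norm{a\cdot\Psi}_1\le\norm{a}\,\norm{\Psi}_1$, multiplicativity via $(a\bu b)\cdot\Psi=a\cdot(b\cdot\Psi)$, strong continuity via $a\mapsto a\cdot\Psi$) are essentially identical in the two proofs.
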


\begin{proof}
Let $\xi=\Gamma(\Psi)\xi_0=\Gamma(\Phi)\xi_1$ and let $\big\{\Upsilon_i\,\big\vert\,i\in I\big\}\subset L^1(\G\,\vert\,\mathscr C)$ be some approximate unit. Then one has 
\begin{align*}
0&= \Gamma(a\cdot \Upsilon_i)(\xi-\xi) \\
&=\Gamma(a\cdot \Upsilon_i)\big(\Gamma(\Psi)\xi_0-\Gamma(\Phi)\xi_1\big) \\
&=\Gamma(a\cdot\big[\Upsilon_i*\Psi\big])\xi_0-\Gamma(a\cdot\big[\Upsilon_i*\Phi\big])\xi_1
\end{align*} So by passing to the limit we get the equality 
$$
\gamma(a)\xi=\Gamma(a\cdot\Psi)\xi_0=\Gamma(a\cdot\Phi)\xi_1.
$$
Let us proceed to check that $\gamma$ has the defining properties of a representation, starting by $\gamma(a)\gamma(b)=\gamma(ab)$ for every $a,b\in\mathscr C$\,: 
\begin{align*}
\gamma(a)\gamma(b)\big[\Gamma(\Psi)\xi_0\big]&=\Gamma\big(ab\bu\Psi(q(b)^{-1}q(a)^{-1}\,\cdot\,)\big)\xi_0 \\ &=\Gamma\big(ab\bu\Psi(q(ab)^{-1}\,\cdot\,)\big)\xi_0 \\
&=\gamma(ab)\big[\Gamma(\Psi)\xi_0\big].
\end{align*}
We now show that $\norm{\gamma(a)}_{\mathbb{B}(\mathcal{E})}\leq\norm{a}_{\mathfrak C_{q(a)}}$\,. By the definition of $\norm{\cdot}_{\mathcal E}$\,, there exists $\{\Phi_n\}_{n\in\mathbb N}\subset L^1(\G\,\vert\,\mathscr C)$ for which $\Gamma(\Phi_n)\xi_0=\xi$ and $\norm{\Phi_n}_{L^1(\G\,\vert\,\mathscr C)}\to\norm{\xi}_{\mathcal E}$\,. Then 
$$
\gamma(a)\xi=\gamma(a)\Gamma(\Phi_n)\xi_0=\Gamma\big(a\cdot \Phi_n\big)\xi_0\,,
$$ 
which yields $\norm{\gamma(a)\xi}_{\mathcal E}\leq \norm{a\cdot \Phi_n}_{L^1(\G\,\vert\,\mathscr C)}$. But 
\begin{align*}
\norm{a\cdot \Phi_n}_{L^1(\G\,\vert\,\mathscr C)}&=\int_\G\,\norm{a\bu\Phi_n(q(a)^{-1}x)}_{\mathfrak C_x}\d x \\
&\leq \int_\G\,\norm{a}_{\mathfrak C_{q(a)}}\norm{\Phi_n(q(a)^{-1}x)}_{\mathfrak C_{q(a)^{-1}x}}\d x \\
&= \norm{a}_{\mathfrak C_{q(a)}}\int_\G\,\norm{\Phi_n(x)}_{\mathfrak C_{x}}\d x \\
&= \norm{a}_{\mathfrak C_{q(a)}}\norm{\Phi_n}_{L^1(\G\,\vert\,\mathscr C)}.
\end{align*} 
So 
$$
\norm{a\cdot \Phi_n}_{L^1(\G\,\vert\,\mathscr C)}\leq \norm{a}_{\mathfrak C_{q(a)}}\norm{\Phi_n}_{L^1(\G\,\vert\,\mathscr C)} \to \norm{a}_{\mathfrak C_{q(a)}}\norm{\xi}_{\mathcal E}\,,
$$ 
hence $\norm{\gamma(a)\xi}_{\mathcal E}\leq \norm{a}_{\mathfrak C_{q(a)}}\norm{\xi}_{\mathcal E}$ and $\norm{\gamma(a)}_{\mathbb B(\mathcal E)}\leq \norm{a}_{\mathfrak C_{q(a)}}$. 

\smallskip
For the continuity of $a\mapsto \gamma(a)\xi$, write (again) $\xi=\Gamma(\Psi)\xi_0$\, and observe that
\begin{align*}
\gamma(b)\xi-\gamma(a)\xi&=\gamma(b)\Gamma(\Psi)\xi_0-\gamma(a)\Gamma(\Psi)\xi_0\\ 
&=\Gamma\big(b\bu\Psi(q(b)^{-1}\,\cdot\,)-a\bu\Psi(q(a)^{-1}\,\cdot\,)\big)\xi_0\\
&=\Gamma\big(b\cdot \Psi-a\cdot \Psi\big)\xi_0\,.
\end{align*} 
In consequence, the continuity of $a\mapsto \gamma(a)\xi$ follows from the continuity of $a\mapsto a\cdot \Psi\in L^1(\G\,\vert\,\mathscr C)$\,. 
\end{proof}

\begin{rem}\label{details}
The initial representation $\Gamma$ can be recovered from the operators $\{\gamma(a)\}_{a\in \mathscr C}$ by 
\begin{equation}\label{froc}
\Gamma(\Phi)\xi=\int_\G \gamma\big(\Phi(x)\big)\xi\,\d x\,.
\end{equation}
Indeed, writing $\xi\in\mathcal{E}$ as $\xi=\Gamma(\Psi)\xi_0$ we get 
\begin{align*}
\int_\G \gamma\big(\Phi(x))\xi\,\d x&=\int_\G \gamma\big(\Phi(x)\big)\big[\Gamma(\Psi)\xi_0\big]\,\d x \\
&=\int_\G \Gamma\big(\Phi(x)\bu \Psi(x^{-1}\,\cdot\,)\big)\xi_0\,\d x \\
&=\Gamma\Big(\int_\G \Phi(x)\bu \Psi\big(x^{-1}\cdot\big)\,\d x\Big)\xi_0 \\
&=\Gamma(\Phi*\Psi)\xi_0=\Gamma(\Phi)\big[\Gamma(\Psi)\xi_0\big]=\Gamma(\Phi)\xi\,.
\end{align*} 
This is part of the so called 'integration/desintegration' theory for Fell Bundles.
\end{rem} 

\begin{defn}\label{discretization}
With the assumptions given above, the {\it discretization} of the representation $\Gamma:L^1(\G\,\vert\,\mathscr C)\to\mathbb B(\mathcal E)$ is $\Gamma^{\rm dis}\!:\ell^1\big(\G\,\vert\,\mathscr C\big)\to \mathbb B(\mathcal E)$ given by 
\begin{equation}\label{frac}
\Gamma^{\rm dis}(\varphi)\xi:=\sum_{x\in\G}\gamma\big(\varphi(x)\big)\xi\,,\quad\textup{ for}\ \varphi\in \ell^1\big(\G\,\vert\,\mathscr C\big)\,.
\end{equation}
\end{defn}

\begin{rem}\label{same}
$\Gamma^{\rm dis}$ is an irreducible representation of a cross-sectional algebra, so it also induces operators 
$\{\gamma^{\rm dis}(a)\}_{a\in \mathscr C}$. We point out that $\gamma^{\rm dis}(a)=\gamma(a)$\,, which follows from the existing definitions by a straightforward computation.
On the other hand, the discrete setting allows a simpler treatment (multipliers are no longer needed). The Fell bundle $\mathscr C^{\rm dis}$ injects isometrically in $\ell^1\big(\G\,\vert\,\mathscr{C}\big)$ by
\begin{equation}\label{lagarta}
\big[\mu(a)\big](x):=\delta_{q(a),x}\,a\,,\quad\forall\,a\in\mathscr C,x\in\G
\end{equation}
(which may be written simply $\mu(a)=a\delta_{q(a)}$)\,, and then we set $\gamma^{\rm dis}:=\Gamma^{\rm dis}\!\circ\mu$\,. To check that this is the same object, one proves immediately that $\mu(a)*\psi=a\cdot\psi$ and then apply $\Gamma^{\rm dis}$\,:
$$
\Gamma^{\rm dis}\big[\mu(a)\big]\Gamma^{\rm dis}(\psi)\xi_0=\Gamma^{\rm dis}\big[\mu(a)*\psi\big]\xi_0=\Gamma^{\rm dis}(a\cdot\psi\big)\xi_0=\gamma(a)\Gamma^{\rm dis}(\psi)\xi_0\,.
$$
\end{rem}

To obtain the main result of this section, which is Proposition \ref{A3}, let us state a couple of lemmas: 

\begin{lem}\label{A2}
Let $T:\mathcal X\to \mathcal Y$ be a bounded operator between Banach spaces. If $\,T$ satisfies the condition 
$$
\forall\,y\in\mathcal Y,\,\forall\,\epsilon>0\,,\,\exists\,x\in\mathcal X\textup{ such that } \max\{\norm{x}-\norm{y},\norm{Tx-y}\}\le\epsilon\,,
$$ \,
then it must be surjective.
\end{lem}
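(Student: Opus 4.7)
The plan is to run the classical iterative ``successive approximation'' argument, the same one that lies behind the open mapping theorem. The hypothesis provides, for every target $y$ and every tolerance $\epsilon$, a vector $x$ whose image is within $\epsilon$ of $y$ and whose norm exceeds $\|y\|$ by at most $\epsilon$. Crucially, this control on $\|x\|$ is what keeps the iterates summable, so one can patch together an absolutely convergent series whose image equals $y$ exactly.

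Concretely, fix $y\in\mathcal Y$ and a summable sequence of positive tolerances $\epsilon_n$, say $\epsilon_n=2^{-n}$. Set $y_0:=y$ and, recursively, apply the hypothesis to the pair $(y_n,\epsilon_n)$ to produce $x_n\in\mathcal X$ with
$$
\norm{x_n}\le \norm{y_n}+\epsilon_n\quad\textup{and}\quad\norm{Tx_n-y_n}\le\epsilon_n,
$$
then define $y_{n+1}:=y_n-Tx_n$. From the second inequality $\norm{y_{n+1}}\le\epsilon_n$, which plugged into the first yields $\norm{x_n}\le\epsilon_{n-1}+\epsilon_n$ for $n\ge 1$ (and $\norm{x_0}\le\norm{y}+\epsilon_0$). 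Hence $\sum_n\norm{x_n}<\infty$.

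Since $\mathcal X$ is complete, the partial sums $s_N:=\sum_{n=0}^N x_n$ converge in norm to some $x\in\mathcal X$. The telescoping identity $\sum_{n=0}^{N}Tx_n=y_0-y_{N+1}=y-y_{N+1}$ together with $\norm{y_{N+1}}\le\epsilon_N\to 0$ and the continuity of $T$ then give $Tx=\lim_N Ts_N=y$, which proves surjectivity.

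There is no real obstacle in this argument; the only point requiring care is the bookkeeping that ensures both $\sum\norm{x_n}<\infty$ and $\norm{y_n}\to 0$. What the lemma is really saying is that a bounded operator whose range is dense in a quantitatively controlled way (the control being $\norm{x}\le\norm{y}+\epsilon$, not merely $\norm{Tx-y}\le\epsilon$) is automatically surjective.
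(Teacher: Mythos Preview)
Your argument is correct: the successive--approximation scheme is exactly the standard way to prove this lemma, and your bookkeeping is clean. The paper does not supply its own proof; it simply cites the result as Lemma~A2 of \cite{BB1} (with a further pointer to \cite[p.~193]{Po}), so there is nothing to compare against beyond noting that the references carry out essentially the same iteration you wrote down.
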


\begin{proof}
This is Lemma A2 from \cite{BB1}; see also \cite[pag. 193]{Po}.
\end{proof}

\begin{lem}\label{ubiforti}
For every $\eta\in\mathcal E$ and $\epsilon>0$ there exists $\varphi\in\ell^1(\G\,\vert\,\mathscr C)$ such that 
\begin{equation}\label{virst}
\p\!\varphi\!\p_{\ell^1(\G\,\vert\,\mathscr C)}\,\le\,\p\!\eta\!\p_{\mathcal E}+\epsilon\,,\quad\p\!\Gamma^{\rm dis}(\varphi)\xi_0-\eta\!\p_{\mathcal E}\,\le\epsilon\,.
\end{equation}
\end{lem}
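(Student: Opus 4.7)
The plan is to discretize a near-optimal $L^1$-representative of $\eta$ by a Riemann sum along a fine partition of its support. Concretely, by algebraic irreducibility and the very definition of $\norm{\cdot}_{\mathcal E}$, first pick $\Phi\in L^1(\G\,\vert\,\mathscr C)$ with $\Gamma(\Phi)\xi_0=\eta$ and $\norm{\Phi}_{L^1(\G\,\vert\,\mathscr C)}\le\norm{\eta}_{\mathcal E}+\epsilon/3$. Since $C_{\sf c}(\G\,\vert\,\mathscr C)$ is dense in $L^1(\G\,\vert\,\mathscr C)$, replace $\Phi$ by a continuous compactly supported $\Phi'$ whose $L^1$-distance from $\Phi$ is so small that simultaneously $\norm{\Phi'}_{L^1(\G\,\vert\,\mathscr C)}\le\norm{\eta}_{\mathcal E}+\epsilon/2$ and $\norm{\Gamma(\Phi')\xi_0-\eta}_{\mathcal E}\le\epsilon/2$; for the second bound use that $\Gamma$ is contractive, giving $\norm{\Gamma(\Phi')\xi_0-\Gamma(\Phi)\xi_0}_{\mathcal E}\le\norm{\Phi'-\Phi}_{L^1}\norm{\xi_0}_{\mathcal E}$, with $\norm{\xi_0}_{\mathcal E}<\infty$ by algebraic irreducibility applied to $\xi_0$ itself.

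Next, set $K:=\operatorname{supp}(\Phi')$, which is compact. By Lemma \ref{repr} the map $a\mapsto\gamma(a)\xi_0$ is continuous on $\mathscr C$, so $x\mapsto\gamma(\Phi'(x))\xi_0\in\mathcal E$ is uniformly continuous on $K$; similarly the fibrewise norm $x\mapsto\norm{\Phi'(x)}_{\mathfrak C_x}$ is uniformly continuous on $K$ (a standard feature of Fell bundles). For a sufficiently fine Borel partition $K=\bigsqcup_{i=1}^{n}E_i$ with chosen representatives $x_i\in E_i$, define
\[
\varphi:=\sum_{i=1}^{n}\mu(E_i)\,\Phi'(x_i)\,\delta_{x_i}\in\ell^1(\G\,\vert\,\mathscr C).
\]
Then $\norm{\varphi}_{\ell^1(\G\,\vert\,\mathscr C)}=\sum_i\mu(E_i)\norm{\Phi'(x_i)}_{\mathfrak C_{x_i}}$ is a Riemann sum for $\norm{\Phi'}_{L^1(\G\,\vert\,\mathscr C)}$, and by the fibrewise linearity of $\gamma$ (immediate from its definition) together with \eqref{frac},
\[
\Gamma^{\rm dis}(\varphi)\xi_0=\sum_{i=1}^{n}\mu(E_i)\,\gamma(\Phi'(x_i))\xi_0
\]
is a Riemann sum for $\int_\G\gamma(\Phi'(x))\xi_0\,\d x=\Gamma(\Phi')\xi_0$ (cf.\ \eqref{froc}). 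Making the partition fine enough forces both sums within $\epsilon/2$ of their respective limits, and the triangle inequality combined with the estimates of the previous step yields \eqref{virst}.

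The main obstacle I expect is carrying out the Riemann approximation in the $\mathcal E$-norm rather than in an intrinsic Fell-bundle norm: because $\norm{\cdot}_{\mathcal E}$ is defined only indirectly, through $\Gamma$ and $\xi_0$, the passage from pointwise to uniform control hinges on the continuity of $a\mapsto\gamma(a)\xi_0$ provided by Lemma \ref{repr}, combined with the compactness of $\operatorname{supp}(\Phi')$. This is exactly what forces the preliminary truncation from $L^1$ to $C_{\sf c}$; one cannot hope to discretize a generic $L^1$-section directly. A subsidiary concern is the measurability and integrability of the $\mathcal E$-valued map $x\mapsto\gamma(\Phi'(x))\xi_0$, but this is handled once and for all by continuity on the compact set $K$.
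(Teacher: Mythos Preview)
Your proposal is correct and follows essentially the same approach as the paper: approximate $\eta$ by $\Gamma(\Phi')\xi_0$ with $\Phi'\in C_{\sf c}(\G\,\vert\,\mathscr C)$ of nearly optimal $L^1$-norm, then use uniform continuity of $x\mapsto\norm{\Phi'(x)}_{\mathfrak C_x}$ and $x\mapsto\gamma(\Phi'(x))\xi_0$ on the compact support to replace $\Gamma(\Phi')\xi_0$ by a Riemann sum $\sum_i\mu(E_i)\gamma(\Phi'(x_i))\xi_0=\Gamma^{\rm dis}(\varphi)\xi_0$. The only cosmetic differences are that the paper combines your two approximation steps into one and builds the partition explicitly from a cover by translates of a small neighborhood of $\e$, whereas you invoke a generic fine Borel partition; the underlying argument is the same.
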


\begin{proof}
Let $0<\delta$\,, to be fixed later. Due to the definition of the norm in $\mathcal E$ and the density of $C_{\rm c}(\G\,\vert\,\mathscr C)$ in $L^1(\G\,\vert\,\mathscr C)$\,, there is an element $\Phi\in C_{\rm c}(\G\,\vert\,\mathscr C)$ such that
\begin{equation}\label{zecund}
\p\!\Phi\!\p_{L^1(\G\,\vert\,\mathscr C)}\,\le\,\p\!\eta\!\p_{\mathcal E}+\delta\,,\quad\p\!\Gamma(\Phi)\xi_0-\eta\!\p_{\mathcal E}\,\le\delta\p\!\xi_0\!\p_{\mathcal E}.
\end{equation}
Recall that, for such a section, the maps $x\mapsto\,\p\!\Phi(x)\!\p_{\mathfrak C_x}$ and $x\mapsto \gamma\big(\Phi(x)\big)\xi_0$ are uniformly continuous. The support of $\Phi$ will be denoted by $\Si$\,. Let us fix an open relatively compact neighborhood $V$ of $\e\in\G$ such that 
\begin{equation}\label{ciorsica}
\p\!\gamma\big(\Phi(xu)\big)\xi-\gamma\big(\Phi(x)\big)\xi\!\p_{\mathcal E}\,\le\delta/\mu(\Si)\,,\quad\forall\,x\in\G\,, u\in V
\end{equation}
and
\begin{equation}\label{kiorsica}
\big\vert\!\p\!\Phi(yu)\!\p_{\mathfrak C_{yu}}-\p\!\Phi(y)\!\p_{\mathfrak C_y}\!\big\vert\le\delta/\mu(\Si)\,,\quad\forall\,x\in\G\,, u\in V.
\end{equation}
Choose $x_1,\dots,x_m\in\G$ such that $\Si\subset\bigcup_{j=1}^m x_jV.$ Setting $M_1:=x_1V\cap\Si$\,, and then inductively 
$$
M_k:=\big(x_k V\cap\Si\big)\!\setminus\!\bigcup_{j<k}M_j\,,\quad k=2,\dots,m\,,
$$
one gets a measurable partition of $\Si$\,. The solution to our problem is expected to be the finitely supported function
\begin{equation*}\label{disperat}
\varphi:=\sum_{j=1}^m\mu(M_j)\Phi(x_j)\delta_{x_j}\,.
\end{equation*}
Its single non-null values are $\varphi(x_j)=\mu(M_j)\Phi(x_j)\in\mathfrak C_{x_j}$\,, $j=1,\dots,m$\,, so $\varphi\in\ell^1(\G\!\mid\!\mathscr C)$\,. One has
$$
\begin{aligned}
\p\!\Phi\!\p_{L^1(\G\,\vert\,\mathscr C)}&=\sum_{j=1}^m\int_{M_j}\!\!\p\!\Phi(x)\!\p_{\mathfrak C_x}\!d\mu(x)\\
&\ge\sum_{j=1}^m\big[\p\!\Phi(x_j)\!\p_{\mathfrak C_{x_j}}\!\!-\delta/\mu(\Si)\,\big]\int_{M_j}\!d\mu(x)\\
&=\sum_{j=1}^m\p\!\Phi(x_j)\!\p_{\mathfrak C_{x_j}}\mu(M_j)-\delta\\
&=\,\p\!\varphi\!\p_{\ell^1(\G\,\vert\,\mathscr C)}\!-\,\delta\,.
\end{aligned}
$$
For the inequality we used \eqref{kiorsica} with $y=x_k$ and the fact that $M_j\subset x_jV$. Combining this with the first identity in \eqref{zecund}, we get
\begin{equation*}\label{primpas}
\p\!\varphi\!\p_{\ell^1(\G\,\vert\,\mathscr C)}\,\le\,\p\!\eta\!\p_{\mathcal E}+2\delta\,.
\end{equation*} 
To reach the second condition in \eqref{virst}, by \eqref{zecund}, we only need to control the difference $\Gamma(\Phi)\xi_0-\Gamma^{\rm dis}(\varphi)\xi_0$\,, using the formulae \eqref{froc} and \eqref{frac}, which in our case become
\begin{equation*}\label{fric}
\Gamma(\Phi)\xi_0=\sum_{j=1}^m\int_{M_j}\!\gamma\big(\Phi(x)\big)\xi_0\,\d x
\end{equation*}
and 
\begin{equation*}\label{fruc}
\Gamma^{\rm dis}(\varphi)\xi_0=\sum_{j=1}^m \mu(M_j)\gamma\big(\Phi(x_j)\big)\xi_0=\sum_{j=1}^m \int_{M_j}\!\gamma\big(\Phi(x_j)\big)\xi_0\,\d x.
\end{equation*}
Therefore, by \eqref{ciorsica}
\begin{align*}
\norm{\Gamma(\Phi)\xi_0-\Gamma^{\rm dis}(\varphi)\xi_0}_{\mathcal E}&=\norm{\sum_{j=1}^m \int_{M_j}\!\gamma\big(\Phi(x)\big)\xi_0-\gamma\big(\Phi(x_j)\big)\xi_0\,\d x}_{\mathcal E} \\
&\leq \sum_{j=1}^m \int_{M_j}\norm{ \gamma\big(\Phi(x)\big)\xi_0-\gamma\big(\Phi(x_j)\big)\xi_0}_{\mathcal E}\,\d x \\
&\leq \sum_{j=1}^m \delta\mu(M_j)/\mu(\Sigma) \\
&=\delta\,.
\end{align*} So we have \begin{align*}
\p\!\Gamma^{\rm dis}(\varphi)\xi_0-\eta\!\p_{\mathcal E}\,&\le\, \p\!\Gamma(\Phi)\xi_0-\Gamma^{\rm dis}(\varphi)\xi_0\!\p_{\mathcal E}+\p\!\Gamma(\Phi)\xi_0-\eta\!\p_{\mathcal E} \\
&\le\delta(1+\norm{\xi_0}_{\mathcal E})\,.
\end{align*}
We finish by taking $\delta<\min\!\big\{\frac{\epsilon}{1+\norm{\xi_0}_{\mathcal E}},\frac{\epsilon}{2}\big\}$\,. 
\end{proof}

\begin{prop}\label{A3}
The discretization $\Gamma^{\rm dis}$ is an algebraically irreducible representation. 
\end{prop}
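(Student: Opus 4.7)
The plan is to apply the surjectivity criterion of Lemma \ref{A2} to the evaluation map $T:\ell^1(\G\,\vert\,\mathscr C)\to\mathcal E$, $T(\varphi):=\Gamma^{\rm dis}(\varphi)\xi_0$, and then bootstrap cyclicity of $\xi_0$ to full algebraic irreducibility. The operator $T$ is manifestly bounded between Banach spaces: since $\Gamma^{\rm dis}$ is contractive (by Lemma \ref{repr} together with Definition \ref{discretization}) and $\xi_0\in\mathcal E$ has finite norm, one has $\p\!T\varphi\!\p_{\mathcal E}\le \p\!\varphi\!\p_{\ell^1(\G\,\vert\,\mathscr C)}\p\!\xi_0\!\p_{\mathcal E}$.

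The main input is Lemma \ref{ubiforti}, which is precisely the hypothesis of Lemma \ref{A2} for this $T$: it says that for every $\eta\in\mathcal E$ and every $\epsilon>0$ there exists $\varphi\in\ell^1(\G\,\vert\,\mathscr C)$ with $\p\!\varphi\!\p_{\ell^1(\G\,\vert\,\mathscr C)}-\p\!\eta\!\p_{\mathcal E}\le\epsilon$ and $\p\!T\varphi-\eta\!\p_{\mathcal E}\le\epsilon$. Lemma \ref{A2} therefore produces the surjectivity of $T$, equivalently $\Gamma^{\rm dis}\big(\ell^1(\G\,\vert\,\mathscr C)\big)\xi_0=\mathcal E$, so $\xi_0$ is a cyclic vector for $\Gamma^{\rm dis}$.

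The subtler step is to upgrade cyclicity of $\xi_0$ to algebraic irreducibility. I would argue that the role of $\xi_0$ was inessential: for any non-zero $\eta\in\mathcal E$, algebraic irreducibility of $\Gamma$ gives $\Gamma\big(L^1(\G\,\vert\,\mathscr C)\big)\eta=\mathcal E$, so the norm $\p\!\tau\!\p_\eta:=\inf\{\p\!\Phi\!\p_{L^1(\G\,\vert\,\mathscr C)}\mid\Gamma(\Phi)\eta=\tau\}$ makes $\mathcal E$ into a Banach space on which $\Gamma$ is contractive, equivalent to $\p\!\cdot\!\p_{\mathcal E}$ by the open mapping theorem. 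The constructions of $\gamma$ (Lemma \ref{repr}) and of $\Gamma^{\rm dis}$ (Definition \ref{discretization}), as well as the whole chain of $\epsilon$-estimates in the proof of Lemma \ref{ubiforti}, depend on $\xi_0$ only through its cyclicity for $\Gamma$; they therefore transfer verbatim with $\eta$ in place of $\xi_0$ (working now with $\p\!\cdot\!\p_\eta$). A second application of Lemma \ref{A2} yields $\Gamma^{\rm dis}\big(\ell^1(\G\,\vert\,\mathscr C)\big)\eta=\mathcal E$, which is the required algebraic irreducibility. The main technical hazard I anticipate is the bookkeeping: one must check that swapping the base-point does not invalidate the uniform continuity, the support partition, and the approximate-unit manipulations used in Lemma \ref{ubiforti}. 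This is routine, because those ingredients pertain to a continuous section $\Phi$ and to the $\|\cdot\|$-structure on each fibre $\mathfrak C_x$, rather than to any particular vector of $\mathcal E$.
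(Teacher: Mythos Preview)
Your proposal is correct and follows essentially the same route as the paper: apply Lemma~\ref{A2} to the evaluation map $T_{\xi_0}(\varphi)=\Gamma^{\rm dis}(\varphi)\xi_0$, with Lemma~\ref{ubiforti} supplying the required approximation hypothesis. The paper simply observes at the outset that the vector $\xi_0$ was arbitrary (the constructions of $\gamma$ and $\Gamma^{\rm dis}$ are shown in Lemma~\ref{repr} to be independent of the cyclic vector, and the resulting norms on $\mathcal E$ are all equivalent), so Lemmas~\ref{A2} and~\ref{ubiforti} immediately yield cyclicity of \emph{every} non-zero vector; your separate ``bootstrap'' paragraph makes this point more explicitly but is not a different argument.
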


\begin{proof}
We need to show that every non-null vector $\xi_0$ is cyclic. This can be restated as the surjectivity of the operator 
$$
T_{\xi_0}:\ell^1\big(\G\,\vert\,\mathscr C\big)\to\mathcal E,\quad T_{\xi_0}(\varphi):=\Gamma^{\rm dis}(\varphi)\xi_0\,,
$$ 
which is a consequence of Lemmas \ref{A2} and \ref{ubiforti}. 
\end{proof}

\subsection{Proof of Theorem \ref{cruciala}}\label{proof}

\begin{defn}\label{stricharz}
Let $\Gamma:\mathfrak B\to\mathbb B(\mathcal E)$ be a representation of the Banach $^*$-algebra $\mathfrak B$ in the Banach space $\mathcal E$\,. The representation is called {\it preunitary} if there exists a Hilbert space $\mathcal H$\,, a topologically irreducible $^*$-representation $\Pi:\mathfrak B\to\mathbb B(\mathcal H)$ and an injective linear and bounded operator $W:\mathcal E\to\mathcal H$ such that
\begin{equation*}\label{twix}
W\Gamma(\phi)=\Pi(\phi)W,\quad\forall\,\phi\in\mathfrak B\,.
\end{equation*}
\end{defn}

Our interest in this notion lies in the next characterization, taken from \cite{Le}:

\begin{lem}\label{deunde}
The Banach $^*$-algebra $\mathfrak B$ is symmetric if and only if all its non-trivial algebraically irreducible representations are preunitary.
\end{lem}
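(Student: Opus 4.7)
The plan is to handle the two directions separately. For the easier implication --- ``all non-trivial algebraically irreducible representations are preunitary $\Rightarrow$ $\mathfrak B$ is symmetric'' --- I would fix $b\in\mathfrak B$ and suppose for contradiction that $\sigma(b^*b)\not\subset[0,\infty)$, so some $\lambda\notin[0,\infty)$ lies in $\sigma(b^*b)$. After adjoining a unit if necessary, $b^*b-\lambda$ is not left invertible and therefore belongs to some maximal modular left ideal $\mathfrak M$. This produces an algebraically irreducible representation $\Gamma:\mathfrak B\to\mathbb B(\mathcal E)$ with a cyclic vector $\xi_0$ satisfying $\Gamma(b^*b)\xi_0=\lambda\xi_0$. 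Invoking preunitarity, one obtains an injective bounded intertwiner $V:\mathcal E\to\mathcal H$ into a topologically irreducible $^*$-representation $\Pi$, yielding $\Pi(b^*b)(V\xi_0)=\lambda(V\xi_0)$ with $V\xi_0\neq 0$. Since $\Pi(b^*b)=\Pi(b)^*\Pi(b)\geq 0$ has spectrum in $[0,\infty)$, this contradicts $\lambda\notin[0,\infty)$.

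For the converse, starting from a symmetric $\mathfrak B$ and a non-trivial algebraically irreducible $\Gamma:\mathfrak B\to\mathbb B(\mathcal E)$, I would choose a cyclic vector $\xi_0\in\mathcal E$ and consider the associated maximal modular left ideal $\mathfrak M:=\{b\in\mathfrak B:\Gamma(b)\xi_0=0\}$, so that $\mathcal E\cong\mathfrak B/\mathfrak M$ as a left $\mathfrak B$-module. The strategy is to produce a pure state $\omega$ on $\mathfrak B$ whose left kernel $L_\omega:=\{b:\omega(b^*b)=0\}$ contains $\mathfrak M$. Once such $\omega$ is in hand, its GNS representation $\Pi_\omega$ on $\mathcal H_\omega$ is topologically irreducible by purity, and the formula $V(\Gamma(b)\xi_0):=\Pi_\omega(b)\Omega_\omega$ yields a well-defined linear operator $V:\mathcal E\to\mathcal H_\omega$ (well-defined precisely because $\mathfrak M\subseteq L_\omega$) that intertwines $\Gamma$ with $\Pi_\omega$ by construction, is bounded since in a symmetric Banach $^*$-algebra pure states are continuous with respect to the enveloping $C^*$-seminorm (which is dominated by the original norm), and satisfies $V\xi_0=\Omega_\omega\neq 0$. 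Because $\ker V$ is a $\Gamma$-invariant subspace, algebraic irreducibility of $\Gamma$ forces $V$ to be injective.

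The genuine obstacle --- and the only step where symmetry of $\mathfrak B$ is essentially used --- is the production of the pure state $\omega$ with $L_\omega\supseteq\mathfrak M$. For $C^*$-algebras this is classical and follows from a Hahn--Banach plus Krein--Milman argument applied to the cone of positive functionals. For a symmetric Banach $^*$-algebra one adapts that argument with the help of the Shirali--Ford theorem, which ensures that $b\mapsto r(b^*b)^{1/2}$ is a $C^*$-seminorm and thereby guarantees a plentiful supply of positive functionals. Concretely, I would fix a right relative identity $u$ modulo $\mathfrak M$, define a positive linear functional on $\mathbb C u+\mathfrak M$ by $\lambda u+m\mapsto\lambda$, extend it positively to all of $\mathfrak B$ (this positivity-preserving extension is where symmetry is crucial and is not automatic for general Banach $^*$-algebras), and finally select an extreme point of the weakly-$^*$-compact convex set of positive extensions in order to obtain the required pure state. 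Verifying that the positive extension genuinely exists and that $\mathfrak M$ lies in its left kernel is the technical heart of the argument.
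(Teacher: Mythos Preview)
The paper does not prove this lemma: it simply records it as ``taken from \cite{Le}'' (Leptin, 1976) and uses it as a black box in Section~\ref{proof}. So there is no in-paper argument to compare against; your proposal is an outline of the classical Leptin proof itself.

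Your sketch is essentially correct, but one step in the first implication is stated too quickly. From $\lambda\in\sigma(b^*b)$ you pass directly to ``$b^*b-\lambda$ is not left invertible''. In a general Banach algebra non-invertibility does not imply failure of one-sided invertibility. The clean fix is either: (i) observe that for a self-adjoint element $h$, if $c(h-\lambda)=1$ then $(h-\bar\lambda)c^*=1$, so for \emph{real} $\lambda$ left invertibility forces two-sided invertibility; and then reduce to real $\lambda$ by using the equivalent characterization ``$\mathfrak B$ symmetric $\Leftrightarrow$ $1+b^*b$ is invertible for every $b$'', which hands you $\lambda=-1$; or (ii) argue via primitive quotients that $\lambda\in\sigma\big(\Gamma(b^*b)\big)$ for some algebraically irreducible $\Gamma$, and then use preunitarity together with injectivity of $V$ to transfer this to $\sigma\big(\Pi(b)^*\Pi(b)\big)\subset[0,\infty)$. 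Either route closes the gap. A second minor point: when you adjoin a unit, make explicit that the restriction of the resulting irreducible representation of $\mathfrak B_1$ to $\mathfrak B$ is still non-trivial and irreducible, so that the preunitarity hypothesis applies.

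For the converse you have correctly identified the crux: symmetry (through the Shirali--Ford theorem and the resulting abundance of positive functionals) is exactly what guarantees that the Hahn--Banach/Krein--Milman construction yields a \emph{positive} extension, hence a pure state $\omega$ with $\mathfrak M\subset L_\omega$. The boundedness and injectivity of $V$ then follow as you indicate. This is precisely Leptin's argument; filling in the positivity-extension step carefully (e.g.\ following \cite{Pa2}) would complete the proof.
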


Let us now prove Theorem \ref{cruciala}.

\begin{proof} 
We start the proof with a non-trivial algebraically irreducible representation $\Gamma:L^1(\G\,\vert\,\mathscr C)\to\mathbb B(\mathcal E)$ and, as above, we denote by $\Gamma^{\rm dis}$ its discretization. By Proposition \ref{A3}, it is also algebraically irreducible. Since $\ell^1\big(\G\,\vert\,\mathscr{C}\big)$ is symmetric, Lemma \ref{deunde} says that $\Gamma^{\rm dis}$ is preunitary, hence there exists a topologically irreducible $^*$-representation $\Pi^{\rm dis}:\ell^1\big(\G\,\vert\,\mathscr{C}\big)\to\mathbb{B}(\mathcal{H})$ and an injective bounded linear operator $W:\mathcal E\to \mathcal H$ such that 
\begin{equation}\label{flori}
W\,\Gamma^{\rm dis}(\varphi)=\Pi^{\rm dis}(\varphi)W,\quad\forall\,\varphi\in\ell^1\big(\G\,\vert\,\mathscr{C}\big)\,.
\end{equation}
The representation $\Pi^{\rm dis}$ induces a representation $\pi^{\rm dis}$ of the Fell bundle $\mathscr C^{\rm dis}$ in the same Hilbert space  $\mathcal H$\,,
which also satisfies $\pi^{\rm dis}(a^*)=\pi^{\rm dis}(a)^*$ for every $a\in\mathscr C$. 
Similarly to Remark \ref{same}, it is enough to set $\pi^{\rm dis}:=\Pi^{\rm dis}\!\circ\mu$\,, where $\mu$ has been defined in \eqref{lagarta}; one easily checks that
\begin{equation*}\label{check}
\Pi^{\rm dis}(\varphi)=\sum_{y\in G}\pi^{\rm dis}\big[\varphi(y)\big]\,,\quad \forall\,\varphi\in\ell^1\big(\G\,\vert\,\mathscr{C}\big)\,.
\end{equation*}
For each $a\in\mathscr C$, using \eqref{flori} and Remark \ref{same}, we obtain 
\begin{equation}\label{comm}
\quad\pi^{\rm dis}(a)W=\Pi^{\rm dis}\big[\mu(a)\big]W=W\Gamma^{\rm dis}\big[\mu(a)\big]=W\gamma^{\rm dis}(a)\,.
\end{equation} 
Let us define the $^*$-representation $\Pi:L^1(\G\,\vert\,\mathscr C)\to\mathbb B(\mathcal H)$ by 
\begin{equation}\label{labar}
\Pi(\Phi)\xi:=\int_\G \pi^{\rm dis}\big(\Phi(x)\big)\xi\,\d x\,.
\end{equation} 
Recall that $\gamma^{\rm dis}=\gamma$ (see Remark \ref{details}). We have
\begin{align*}
W\Gamma(\Phi)\xi\overset{\eqref{froc}}{=}W\int_\G \!\gamma\big(\Phi(x)\big)\xi\,\d x\overset{\eqref{comm}}{=}\int_\G\pi^{\rm dis}\big(\Phi(x)\big)W\xi\,\d x\overset{\eqref{labar}}{=}\Pi(\Phi)W\xi.
\end{align*} 
This means that $\Pi(\Phi)W=W\Gamma(\Phi)$ for every $\Phi\in L^1(\G\,\vert\,\mathscr C)$\,. So the representation $\Gamma$ is preunitary, hence $L^1(\G\,\vert\,\mathscr C)$ is symmetric.
\end{proof}

\subsection{Twisted partial group actions and crossed products}\label{goam}

Let $\Theta:=(\G,\te,\o,\A)$ be {\it a continuous twisted partial action} of the locally compact group $\G$ on the $C^*$-algebra $\A$\,. We refer to \cite{Ex1,Ab} for an exposure of the general theory. In any case, the action is implemented by isomorphisms between ideals
\begin{equation*}\label{isoideal}
\te_x:\A_{x^{-1}}\!\to\A_x\,,\quad x\in\G
\end{equation*}
and unitary multipliers
\begin{equation*}\label{medicament}
\o(y,z)\in\mathbb{UM}\big(\A_y\cap\A_{yz}\big)\,,\quad y,z\in\G
\end{equation*}
satisfying suitable algebraic and topological axioms. (See \cite[Def. 2.1]{Ex1})

\smallskip
In \cite{Ex1}, Exel associates to $(\G,\te,\o,\A)$ {\it the twisted partial semidirect product Fell bundle} in the following way: The total space is 
\begin{equation*}\label{zumab}
\mathscr C(\Theta):=\big\{(a,x)\,\big\vert\,a\in\A_x\big\}
\end{equation*}
with the topology inherited from $\A\!\times\!\G$ and the obvious bundle projection $p(a,x):=x$\,. One has 
\begin{equation}\label{racetamol}
\mathfrak C_x(\Te)=\A_x\!\times\!\{x\}\,,\quad \forall\,x\in\G\,,
\end{equation}
with the Banach space structure transported from $\A_x$ in the obvious way. The algebraic laws of the bundle are
\begin{equation}\label{product}
(a,x)\bu_\Te(b,y):=\big(\te_x\big[\te_x^{-1}(a)b\big]\o(x,y),xy\big)\,,\quad\forall\,x,y\in\G\,,\,a\in\A_x\,,\,b\in\A_y\,,
\end{equation}
\begin{equation}\label{invol}
(a,x)^{\bu_\Te}\!:=\big(\te_x^{-1}(a^*)\o(x^{-1},x)^*\!,x^{-1}\big)\,,\quad\forall\,x\in\G\,,\,a\in\A_x\,.
\end{equation}

\begin{rem}\label{spanak}
In \cite{Ex1} it is shown that twisted partial semidirect product Fell bundles are very general: every separable Fell bundle with stable unit fiber $\mathfrak C_\e$ is of such a type.
\end{rem}

For a (continuous) section $\Phi$ and for any $x\in\G$ one has $\Phi(x)=\big(\tilde\Phi(x),x\big)$\,, with $\tilde\Phi(x)\in\A_x\subset\A$\,. This allows identifying $\Phi$ with a function $\tilde\Phi:\G\to\A$ such that $\tilde\Phi(x)\in\A_x\subset\A$ for each $x\in\G$\,. By this identification $L^1\big(\G\,\vert\,\mathscr C(\Te)\big)$ can be seen as $L_{\Te}^1\big(\G,\A\big)$\,, the completion of
\begin{equation*}\label{pisici}
C_{\rm c}\big(\G,\{\A_x\}_x\big):=\big\{\tilde\Phi\in C_{\rm c}(\G,\A)\,\big\vert\,\tilde\Phi(x)\in\A_x\,,\forall\,x\in\G\big\}
\end{equation*}
in the norm
\begin{equation*}\label{flax}
\p\!\tilde\Phi\!\p_{L^1_\Te(\G,\A)}\,:=\int_\G\!\p\tilde\Phi(x)\!\p_\A \!dx\equiv\,\p\!\tilde\Phi\!\p_{L^1(\G,\A)}\,,
\end{equation*}
so it sits as a closed (Banach) subspace of $L^1(\G,\A)$. The substantial difference is the semidirect product Fell bundle algebraic structure of $C_{\rm c}\big(\G,\{\A_x\}_x\big)$\,, consequence of \eqref{broduct},\,\eqref{inwol},\,\eqref{product} and \eqref{invol}\,:
$$
\big(\tilde\Phi*_{\Te}\!\tilde\Psi\big)(x)=\int_\G \te_y\big[\te_y^{-1}\big(\tilde\Phi(y)\big)\tilde\Psi(y^{-1}x)\big]\o\big(y,y^{-1}x\big)\d y
$$ 
and
$$
\tilde\Phi^{*_\Te}(x)=\Delta(x^{-1})\te_{x}\big[\tilde\Phi(x^{-1})^*\big]\o(x^{-1}\!,x)\,,
$$
which extends to $L^1_\Te(\G,\A)$\,. The $C^*$-envelope of $L^1_\Te(\G,\A)$ is $\A\rtt_\Te\!\G:=C^*\big(\G\,\vert\,\mathscr C(\Te)\big)$ and it is called {\it the (partial, twisted) crossed product of $\,\G$ and $\A$}\,. The translation of Theorems \ref{theoremix} and \ref{theoremix1} to this setting implies the following corollary.

\begin{cor}\label{fetitele}
Let $\Theta:=(\G,\te,\o,\A)$ be a continuous twisted partial action of the locally compact group $\G\,$ for which ether the discrete group $\G^{\rm dis}$ is rigidly symmetric, or it is symmetric and $\A\rtt_\Te\G^{\rm dis}$ is type $I$\,. Then $L^1_\Te(\G,\A)$ is a symmetric Banach $^*$-algebra.
\end{cor}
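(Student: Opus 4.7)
The plan is to present this result as a direct translation of Theorems \ref{theoremix} and \ref{theoremix1} through the Fell bundle $\mathscr C(\Te)$ attached to the twisted partial action, as constructed in Section \ref{goam}. There is no substantive new content: the whole point is that the constructions \eqref{racetamol}--\eqref{invol} package the twisted partial action as a Fell bundle in such a way that the $L^1$- and $C^*$-algebras match.

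First I would recall that $\mathscr C(\Te)$ is a genuine Fell bundle over $\G$ (this is the content of \cite{Ex1}), so Theorems \ref{theoremix} and \ref{theoremix1} apply verbatim to it. Next I would identify the Banach $^*$-algebras involved: the paragraph preceding the corollary establishes the $^*$-isomorphism $L^1(\G\,\vert\,\mathscr C(\Te))\cong L_\Te^1(\G,\A)$ via $\Phi\mapsto\tilde\Phi$, together with the formulas for $*_\Te$ and $*_\Te$-involution obtained from the Fell-bundle operations $\bu_\Te$ and ${}^{\bu_\Te}$. Under this identification the two algebras have identical spectral behaviour, so symmetry of one is equivalent to symmetry of the other.

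The first case of the hypothesis, $\G^{\rm dis}$ rigidly symmetric, is handled by invoking Theorem \ref{theoremix} applied to $\mathscr C = \mathscr C(\Te)$. For the second case I would note that, by definition, $\A\rtt_\Te\G^{\rm dis}$ is the $C^*$-envelope of $L^1_\Te(\G^{\rm dis},\A)\cong\ell^1\big(\G\,\vert\,\mathscr C(\Te)\big)$, which by Lemma \ref{memix} equals $C^*\big(\G^{\rm dis}\,\vert\,\mathscr C(\Te)^{\rm dis}\big)$. Hence the assumption that $\A\rtt_\Te\G^{\rm dis}$ is type $I$ is exactly the type $I$ hypothesis of Theorem \ref{theoremix1}, and that theorem then delivers symmetry of $L^1\big(\G\,\vert\,\mathscr C(\Te)\big)\cong L_\Te^1(\G,\A)$.

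Since every ingredient is already in place, there is no real obstacle; the only thing one has to be careful about is the dictionary between the partial-action notation ($\te_x$, $\o$, $\A_x$, $\A\rtt_\Te\G^{\rm dis}$) and the Fell-bundle notation ($\mathfrak C_x$, $\bu_\Te$, ${}^{\bu_\Te}$, $C^*(\G^{\rm dis}\,\vert\,\mathscr C(\Te)^{\rm dis})$). I would therefore keep the proof to a couple of lines, essentially pointing to the construction of $\mathscr C(\Te)$, the identification of the $L^1$- and $C^*$-algebras, and the direct invocation of Theorems \ref{theoremix} and \ref{theoremix1}.
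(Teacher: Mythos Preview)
Your proposal is correct and matches the paper's approach exactly: the paper does not even write out a proof, stating only that the corollary is ``the translation of Theorems \ref{theoremix} and \ref{theoremix1} to this setting,'' and your argument spells out precisely this translation via the identifications $L^1_\Te(\G,\A)\cong L^1\big(\G\,\vert\,\mathscr C(\Te)\big)$ and $\A\rtt_\Te\G^{\rm dis}\cong C^*\big(\G^{\rm dis}\,\vert\,\mathscr C(\Te)^{\rm dis}\big)$.
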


\begin{rem}\label{Are}
A (very) particular case it the one of {\it  global} twisted actions, basically characterized by $\A_x=\A$ for every $x\in\G$\,. Even more particularly, one may take $\A=\mathbb C$\,, with the trivial action, and then $\o:\G\!\times\!\G\to\T$ is {\it a multiplier}. In \cite{Au} Austad shows for such a case similar results, but assuming that the extension $\G_\o$ of $\T$ by $\G$ associated to $\o$ is symmetric. These are good assumptions, since rigid symmetry or discretization are not required. On the other hand, the extension could be much more complicated than the group itself.
\end{rem}

\subsection{A weighted symmetry result}\label{flocinyor}

A {\it weight} on the locally compact group $\G$ is a continuous function $\nu: \G\to [1,\infty)$ satisfying 
\begin{equation*}\label{submultiplicative}
 \nu(xy)\leq \nu(x)\nu(y)\,,\quad\nu(x^{-1})=\nu(x)\,,\quad\forall\,x,y\in\G\,.
\end{equation*}
This gives rise to the Banach $^*$-algebra $L^{1,\nu}(\G)$ defined by the norm
\begin{equation*}\label{ponderata}
\p\!\psi\!\p_{L^{1,\nu}(\G)}\,:=\int_\G\nu(x)|\psi(x)|dx\,.
\end{equation*}
Let $\mathscr C$ be a Fell bundle over $\G$\,. On $\,C_{\rm c}(\G\!\mid\!\mathscr C)$  we introduce the norm
\begin{equation}\label{normix}
\p\!\Phi\!\p_{L^{1,\nu}(\G\mid\mathscr C)}\,\equiv\,\p\!\Phi\!\p_{L^{1,\nu}}\,:=\int_\G\nu(x)\!\p\!\Phi(x)\!\p_{\mathfrak C_x}\!dx\,.
\end{equation}
The completion in this norm, denoted by $L^{1,\nu}(\G\!\mid\!\mathscr C)$\,, is a Banach $^*$-algebra with the algebraic structure inherited from $L^1(\G\!\mid\!\mathscr C)$\,. 
In this context we are going to improve Theorem \ref{theoremix}.

\begin{thm}\label{oix}
Let $\G$ be a locally compact group which is rigidly symmetric as a discrete group and $\nu$ a weight. Assume that there exists a generating subset $U$ of $\,\G$ containing the unit $\e$ such that almost everywhere
\begin{enumerate}
\item[(a)] 
\begin{equation}\label{sprot}
\lim_{n\rightarrow \infty}\sup_{x \in U^n}\nu(x)^{\frac{1}{n}}=1\,,
\end{equation}
\item[(b)] 
for some finite constant $C$ one has for any $n\in\mathbb{N}$
\begin{equation}\label{limmit}
\sup_{x\in U^n\setminus U^{n-1}}\!\nu(x) \leq C\!\inf_{x\in U^n\setminus U^{n-1}}\!\nu(x)\,.
\end{equation}
\end{enumerate}
Then $L^{1,\nu}(\G\!\mid\!\mathscr C)$ is a symmetric Banach $^*$-algebra for every Fell bundle $\mathscr C$ over $\G$\,.
\end{thm}

\begin{proof}
We are going to adapt to our more general setting the strategy from \cite{FGL}, involving spectral radii and Hulanicki's Lemma (in the form of \cite[Lemma\,3.1]{FGL0}, to allow for non-unital algebras); see also \cite{JM}. 

\smallskip
As $\nu(\cdot)\geq 1$\,, we have $\p\!\cdot\!\p_{L^1}\,\leq\,\p\!\cdot\!\p_{L^{1,\nu}}$\,;
the spectral radius formula implies that 
$$
r_{L^{1}}(\Phi)\leq r_{L^{1,\nu}}(\Phi)\,,\quad\forall\,\Phi\in {L^{1,\nu}(\G\!\mid\!\mathscr{C})}\,.
$$
To prove the opposite inequality, pick $U$ as in the statement. Later on we will also need the product of several elements. From \eqref{broduct} we get 
\begin{equation}\label{mutzi}
\begin{aligned}
&\big(\Phi_1\ast\dots\ast\Phi_n\big)(x)\\
&=\int_\G\int_\G\dots\int_\G \Phi_1(y_1)\bu\Phi_2\big(y_1^{-1}y_2\big)\bu\dots\bu\Phi_n\big(y_{n-1}^{-1}x\big)\,dy_1 dy_2\dots dy_{n-1}\,.
\end{aligned}
\end{equation}Setting $\Phi_1=\dots=\Phi_n\equiv\Phi$ in \eqref{mutzi}, combining this with \eqref{normix}, making a change of variables and using the decomposition $\G=\bigsqcup_{m\in\mathbb N}(U^m\setminus U^{m-1})$\,, one easily obtains
\begin{equation}\label{forteen}
\begin{aligned}
 &\quad\quad\p\!\Phi^{\ast n} \!\p_{L^{1,\nu}}\\
 &\leq \int_\G\!\dots\!\int_\G\!\nu(x)\!\p\!\Phi(y_1) \!\p_{\mathfrak C_{y_1}}\!\p\!\Phi\big(y_1^{-1}y_2\big) \!\p_{\mathfrak C_{y_1^{-1}y_2}}\!\!\!\dots \!\p\!\Phi(y_{n-1}^{-1}x) \!\p_{\mathfrak C_{y_n^{-1}x}}\!\!dy_1\dots dy_{n-1}dx\\
 &= \int_\G\cdots\int_\G\nu(x_1\dots x_n)\!\p\!\Phi(x_1) \!\p_{\mathfrak C_{x_1}}\cdots \!\p\!\Phi(x_n) \!\p_{\mathfrak C_{x_n}}\!dx_1\dots dx_n\\
&=\!\sum_{m_1,\dots,m_n}\underset{U^{m_1}\setminus U^{m_1-1}}{\int}\!\!\!\dots\!\!\!\underset{U^{m_n}\setminus U^{m_n-1}}{\int}\!\nu(x_1\dots x_n)\!\p\!\Phi(x_1) \!\p_{\mathfrak C_{x_1}}\!\dots \!\p\!\Phi\big(x_n) \!\p_{\mathfrak C_{x_n}} \!dx_1\dots dx_n\,.
\end{aligned}
\end{equation}
Define
\begin{equation*}
 v(n):=\sup_{x\in U^{|n|}} \nu(x)\,,\quad\forall\,n\in\Z\,.
\end{equation*}
It is a weight and one has the obvious associated weighted space $\ell^{1,\nu}(\Z)$\,. 
If $x_j\in U^{m_j}\setminus U^{m_j-1}$, then $x_1\cdots x_n \in U^{m_1+\cdots +m_n}$ and so the weight satisfies
\begin{equation*}
\nu(x_1\cdots x_n)\leq \sup_{y\in U^{m_1+\cdots +m_n}} \nu(y) =v(m_1+\cdots m_n)\,.
\end{equation*}
Set $\beta(m):=\int_{U^m\setminus U^{m-1}}\!\!\p\!\Phi(x)\!\p_{\mathscr C_x}\!dx$ and extend it by null values for negative $m$\,. Then we have $\p\!\Phi \!\p_{L^1}\,=\,\p\beta\!\p_{\ell^1(\mathbb Z)}$\,. The condition \eqref{limmit} implies immediately that 
\begin{equation*}
C^{-1}\!\p\!\beta\!\p_{\ell^{1,v}(\mathbb Z)}\,\leq \,\p\!\Phi \!\p_{L^{1,\nu}}\,\leq\,\p\!\beta \!\p_{\ell^{1,v}(\mathbb Z)}.
\end{equation*}
Returning to \eqref{forteen} and denoting by $\beta^{\star n}$ the iterated self-convolution of $\beta$\,, we obtain
\begin{equation*}
\p\!\Phi^{\ast n} \!\p_{L^{1,\nu}}\,\leq\!\sum_{m_1,\dots,m_n=1}^{\infty}\!v(m_1+\cdots +m_n)\,\beta(m_1)\cdots \beta(m_n)=\;\p\!\beta^{\star n} \!\p_{\ell^{1,v}(\mathbb Z)}\,<\infty\,.
\end{equation*}
By its definition and by \eqref{sprot}, the weight $v$ on the group $\mathbb Z$ satisfies the GRS-condition $\lim_{n\rightarrow \infty}v(n)^{\frac{1}{n}}=1$\,, and $\ell^{1,v}(\mathbb Z)$ is symmetric by \cite{FGL0}. Hence
\begin{align*}
r_{L^{1,\nu}}(\Phi)&=\lim_{n\rightarrow\infty} \!\p\!\Phi^{\ast n} \!\p_{L^{1,\nu}}^{1/n}\,\leq \lim_{n\rightarrow\infty} \!\p \beta^{\star n} \!\p_{\ell^{1,v}(\mathbb{Z})}^{1/n}\\
&=r_{\ell^{1,v}(\mathbb{Z})}(\beta)=r_{\ell^{1}(\mathbb{Z})}(\beta)\\
&\le\,\p\!\beta \!\p_{\ell^{1}(\mathbb{Z})}\,=\,\p\!\Phi \!\p_{L^{1}}.
\end{align*}
So for all $k\in\mathbb N$ we have
\begin{equation*}
r_{L^{1,\nu}}(\Phi)=r_{L^{1,\nu}}\big(\Phi^{\ast k}\big)^{1/k}\!\leq\,\p\!\Phi^{\ast k} \!\p_{L^{1}}^{1/k}.
\end{equation*}
Letting $k\rightarrow\infty$ we obtain the required inequality $\,r_{L^{1,\nu}}(\Phi)\leq r_{L^{1}}(\Phi)$\,, which finishes the proof by Hulanicki's Lemma, since $L^1(\G\,\vert\,\mathscr C)$ is symmetric, by Theorem \ref{theoremix}.
\end{proof}

\begin{rem}\label{banachalgebraic}
The algebra $L^{1,\nu}(\G\,\vert\,\mathscr C)$ is very close in nature to the algebras previously considered. One can form a Banach $^*$-algebraic bundle $\mathscr C^\nu$ (it fails to satisfy the $C^*$-condition) such that 
$$
L^{1,\nu}(\G\,\vert\,\mathscr C)=L^1(\G\,\vert\,\mathscr C^\nu)\,.
$$ The bundle $\mathscr C^\nu$ is easily constructed by taking $\mathscr C$ as the underlying space but changing the norm in $\mathfrak C_x$ to the equivalent one $\norm{\cdot}_{\mathfrak C_x^\nu}:=\nu(x)\norm{\cdot}_{\mathfrak C_x}$\,.
\end{rem}

\subsection{Kernels for Fell bundles over locally compact groups}\label{pishpish}

We are now interested in vector-valued kernels in the presence of a Fell bundle $\mathscr C\overset{q}{\to}\G$ and a weight $\nu$\,. We assume that $\G$ is unimodular.

\begin{defn}\label{nucleo}
{\it A kernel-section} is a measurable function $K:\G^2\equiv\G\!\times\!\G\to\mathscr C$ such that $K(x,y)\in\mathfrak C_{xy^{-1}}$ for almost every $x,y\in\G$\,.  It is {\it $\nu$-convolution-dominated}, and we write $K\in\mathfrak K^\nu\big(\G^2\vert\,\mathscr C\big)$\,, if there exists $k\in L^{1,\nu}(\G)$ such that almost everywhere
\begin{equation}\label{camioneta}
\norm{K(x,y)}_{\mathfrak C_{xy^{-1}}}\!\le k\big(xy^{-1}\big)\,.
\end{equation}
Such a kernel-section is called {\it covariant} if $K(xz,yz)=K(x,y)$ almost everywhere. We write $K\in\mathfrak K_{\rm cov}^\nu\big(\G^2\vert\,\mathscr C\big)$\,.
When $\nu=1$ we write simply $\mathfrak K\big(\G^2\vert\,\mathscr C\big)$ and $\mathfrak K_{\rm cov}\big(\G^2\vert\,\mathscr C\big)$\,.
\end{defn}

It is not difficult to verify that $\mathfrak K^\nu\big(\G^2\vert\,\mathscr C\big)$ is a Banach $^*$-algebra with the product
\begin{equation*}\label{brod}
\big(K\diamond L\big)(x,y):=\int_\G K(x,z)\bu L(z,y)\,\d z\,,
\end{equation*}
the involution
\begin{equation*}\label{inw}
K^{\diamond}(x,y):=K(y,x)^\bu\,
\end{equation*}
and the norm
\begin{equation*}\label{orm}
\left\Vert K \right\Vert_{\mathfrak K^\nu} := \inf\left\{ \Vert k \Vert_{L^{1,\nu}(\G)}\,\big\vert\,k \textrm{ satisfies \eqref{camioneta}}\right\}.
\end{equation*}
In addition, $\mathfrak K_{\rm cov}^\nu\big(\G^2\vert\,\mathscr C\big)$ is a closed $^*$-subalgebra.

\begin{prop}\label{inchep}
The mapping $\Gamma:L^{1,\nu}(\G|\mathscr{C})\rightarrow \mathfrak K^\nu\big(\G^2\,\vert\,\mathscr C\big)$ given by 
\begin{equation*}\label{incap}
(\Gamma\Phi)(x,y):=\Phi\big(xy^{-1}\big)\,,\,\,\,\,\forall\,x,y\in\G\,,
\end{equation*}
is an isometric $^*$-algebraic morphism. Its range is $\mathfrak K^\nu_{\rm cov}\big(\G^2\vert\,\mathscr C\big)$\,. On this range, the inverse reads
\begin{equation*}\label{incep}
\big(\Gamma^{-1}K\big)(x):=K(x,\e)\,,\quad \forall\,x\in\G\,.
\end{equation*}
\end{prop}

\begin{proof}
Obviously, the values of $\Gamma$ are all kernel-sections and are covariant. Each $\Gamma\Phi$ is convolution dominated, with $k(x)\!:=\,\p\!\Phi(x)\!\p_{\mathfrak C_x}$\,, and
\begin{equation*}\label{trompetica}
\p\!\Gamma\Phi\!\p_{_{\mathfrak K^\nu}}\,=\,\p\!k\!\p_{L^{1,\nu}(\G)}\,\overset{\eqref{normix}}{=}\,\p\!\Phi\!\p_{L^{1,\nu}(\G\mid\mathscr C)}\,.
\end{equation*}

All the algebraic verifications are trivial.
\end{proof}

\begin{cor}\label{increp}
In the setting of Theorem \ref{oix}, for every Fell bundle $\mathscr C$ over $\G$\,, the Banach $^*$-algebra $\mathfrak K_{\rm cov}^\nu\big(\G^2\,\vert\,\mathscr C\big)$ is symmetric.
\end{cor}

\begin{proof}
This follows from Proposition \ref{inchep} and Theorem \ref{oix}.
\end{proof}

To find a larger symmetric Banach $^*$-algebra of convolution-dominated section-kernels, from the Fell bundle $\mathscr C\overset{q}{\to}\G$ with algebraic operations $\big(\bu,^\bu\big)$ we construct a new Fell bundle $\tilde{\mathscr C}\overset{Q}{\to}\G$\,, with fibres
\begin{equation*}\label{jepurash}
\begin{aligned}
&\tilde{\mathfrak C}_x\equiv{\rm RUC}\big(\G,\mathfrak C_x\big)\\
&:=\big\{f:\G\to\mathfrak C_x\,\big\vert\,f\ {\rm bounded\ and\ continuous}\,,\ \lim_{z\to \e}\sup_{y}\norm{f(z^{-1}y)-f(y)}_{\mathfrak C_x}\!=0\big\}\,,
\end{aligned}
\end{equation*} 
norm
\begin{equation*}\label{cincimin}
\norm{f}_{\tilde{\mathfrak C}_x}\!:=\sup_{y}\norm{f(y)}_{\mathfrak C_x}
\end{equation*}
and the algebraic structure given by
\begin{equation}\label{cuc}
(f\tilde\bu g)(z):=f(z)\bu g\big(Q(f)^{-1}z\big)\,,
\end{equation}
\begin{equation}\label{singur}
f^{\tilde\bu}(z):=f\big(Q(f)z\big)^\bu.
\end{equation}
It is easy to check all the axioms. Using notations of the form 
\begin{equation*}\label{framanta}
[F(x)](z)\equiv F(x,z)\in\mathfrak C_x\,,\quad x,z\in\G\,,
\end{equation*}
the algebraic operations on $L^{1,\nu}\big(\G\!\mid\!\tilde{\mathscr C}\,\big)$ are
\begin{equation*}\label{velmonte}
\big(F\tilde\ast G\big)(x,z)=\int_\G F(y,z)\bu G\big(y^{-1}x,y^{-1}z\big)\d y\,,
\end{equation*}
\begin{equation*}\label{aquilar}
F^{\tilde\ast}\!(x,z):=F\big(x^{-1}\!,x^{-1}z\big)^\bu.
\end{equation*}

\begin{rem}\label{jarp}
Note that $\mathscr C$ can be seen as a Fell sub-bundle of $\tilde{\mathscr C}$\,; in each fiber one only consider the constant functions. Consequently $L^{1,\nu}\big(\G\!\mid\!\mathscr C\big)$ may be identified with a $^*$-subalgebra of $L^{1,\nu}\big(\G\!\mid\!\tilde{\mathscr C}\,\big)$\,. 
\end{rem}

\begin{ex}\label{bucluc}
Let $(\A,\alpha,\G)$ be a global $C^*$-algebraic dynamical system \cite{Wil} (the partial version is more complicated but similar). For simplicity $\A$ is unital. From it we construct the Fell bundle with $\mathscr C_x:=\A\!\times\!\{x\}$ for every $x\in\G$ and with algebraic rules
\begin{equation*}\label{murtzi}
(a,x)\bu(b,y):=\big(a\alpha_x(b),xy\big)\,,\quad(a,x)^\bu:=\big(\alpha_x^{-1}(a)^*,x^{-1}\big)\,.
\end{equation*}
One has $(a,x)=(a,\e)\bu(1_\A,x)$\,.
The reader can check easily that, modulo suitable identifications, the Fell bundle $\tilde{\mathscr C}$ arises from the dynamical system $\big({\rm RUC}(\G,\A),\tilde\alpha,\G\big)$ for the global action
\begin{equation*}\label{identif}
\tilde\alpha:\G\to{\rm Aut}\big[{\rm RUC}(\G,\A)\big]\,,\quad\big[\tilde\alpha_x(f)\big](z):=\alpha_x\big[f(x^{-1}z)\big]\,.
\end{equation*}
There are also cohomologically twisted versions of the global and even of the partial dynamical system \cite{Ex1} that fit in the Fell bundle framework, that we do not make explicit here.
\end{ex}

\begin{prop}\label{suculent}
The map $\Si:L^{1,\nu}\big(\G\,\vert\,\tilde{\mathscr C}\,\big)\to\mathfrak K^\nu\big(\G^2\,\big\vert\,\mathscr C)$\,, given by
\begin{equation}\label{patt}
\big[\Si(F)\big](x,y):=\big[F\big(xy^{-1}\big)\big](x)\equiv F\big(xy^{-1}\!,x\big)\,,
\end{equation} 
is an isometric morphism of Banach $^*$-algebras. Its range, denoted by $\mathfrak K^\nu_{\rm RUC}\big(\G^2\,\big\vert\,\mathscr C)$\,, can be characterized as the family of elements $K\in\mathfrak K^\nu\big(\G^2\,\big\vert\,\mathscr C)$ such that for almost every $z\in\G$ 
\begin{equation*}\label{lash}
 \lim_{y\to \e}{\rm ess}\sup_{x}\norm{K\big(y^{-1}x,z^{-1}y^{-1}x\big)-K\big(x,z^{-1}x\big)}_{\mathfrak C_x}\!=0\,.
\end{equation*}
\end{prop}

\begin{proof}
Although not strictly necessary, a fuller story is told in the next commutative diagram:
$$
\begin{diagram}
\node{L^{1,\nu}\big(\G\,\vert\,\mathscr C\,\big)}\arrow{s,l}{\Gamma}\arrow{e,t}{}\node{L^{1,\nu}\big(\G\,\vert\,\tilde{\mathscr C}\,\big)}\arrow{se,t}{\Si}\arrow{s,l}{\tilde{\Gamma}}\\ 
\node{\mathscr K^\nu_{\rm cov}\big(\G^2\,\vert\,\mathscr C\big)}\arrow{n}{}\arrow{e,t}{}\node{\mathscr K_{\rm cov}^\nu\big(\G^2\,\vert\,\tilde{\mathscr C}\,\big)}\arrow{n}{}\arrow{e}\arrow{e,t}{\Omega}\node{\mathscr K^\nu\big(\G^2\,\vert\,\mathscr C\big)}
\end{diagram}
$$
The unlabeled horizontal arrows are just obvious inclusions; see Remark \ref{jarp}. Proposition \ref{inchep} applied to the Fell bundle $\tilde{\mathscr C}$ justifies the isomorphism $\tilde\Gamma$. The morphism $\Omega:\mathscr K_{\rm cov}^\nu\big(\G^2\,\vert\,\tilde{\mathscr C}\,\big)\to \mathscr K^\nu\big(\G^2\,\vert\,\mathscr C\big)$ is given by
\begin{equation*}\label{diare}
(\O\mathbf K)(x,y):=\big[\mathbf K(x,y)\big](x)\equiv\mathbf K(x,y;x)\,,
\end{equation*}
so that composing $\O$ with $\tilde\Gamma$ we get \eqref{patt}. Let us check that $\O$ is indeed a morphism. For the product, using natural notations, one gets:
$$
\begin{aligned}
\O(\mathbf K\tilde\diamond\mathbf L)(x,y)&=(\mathbf K\tilde\diamond\mathbf L)(x,y;x)\\
&=\int_\G\big[\mathbf K(x,z)\tilde\bu\mathbf L(z,y)\big](x)dz\\
&\overset{\eqref{cuc}}{=}\int_\G\mathbf K(x,z;x)\bu\mathbf L(z,y;zx^{-1}x)dz\\
&=\int_\G(\O\mathbf K)(x,z)\bu(\O\mathbf L)(z,y)dz\\
&=(\O\mathbf K\diamond\O\mathbf L)(x,y)\,.
\end{aligned}
$$
When aplying \eqref{cuc}, we also used the fact that $Q\big[\mathbf K(x,z)\big]^{-1}\!=zx^{-1}$. For the involution one can write
$$
\begin{aligned}
\big(\O\mathbf K^{\tilde\diamond}\big)(x,y)&=\big[\mathbf K^{\tilde\diamond}(x,y)\big](x)\\
&=\big[\mathbf K(y,x)\big]^{\tilde\bu}(x)\\
&\overset{\eqref{singur}}{=}\big[\mathbf K(y,x)\big]\big(yx^{-1}x\big)^{\bu}\\
&=(\O\mathbf K)(y,x)^\bu\\
&=(\O\mathbf K)^\diamond(x,y)\,.
\end{aligned}
$$
For isometry, it is sufficient (and easier) to work with $\Si$\,, first defined on $C_{\rm c}\big(\G\,\vert\,\tilde{\mathscr C}\,\big)$\,. In this case, the condition $\p\!(\Si F)(x,y)\!\p_{\mathfrak C_{xy^{-1}}}\le k\big(xy^{-1}\big)$ for every $x,y$ can be rewritten as 
$$
\p\!F(z)\!\p_{\tilde{\mathfrak C}_{z}}=\sup_x\!\p\!F(z,x)\!\p_{\mathfrak C_{z}}\,\le k(z)\,,\quad\forall\,z\in\G\,.
$$ 
Then one gets the isometry just by applying the definitions of the norms.
The characterization of the range follows from an examination of the mapping $\Si$\,.
\end{proof}

\begin{cor}\label{increpp}
In the setting of Theorem \ref{oix}, for every Fell bundle $\mathscr C$ over $\G$\,, the Banach $^*$-algebra $\mathfrak K_{\rm RUC}^\nu\big(\G^2\,\vert\,\mathscr C\big)$ is symmetric.
\end{cor}

\begin{proof}
Consequence of Proposition \ref{suculent} and Theorem \ref{oix}.
\end{proof}

\section*{Acknowledgements}

F. Flores has been partially supported by the Fondecyt Project 1171854 and the Fondecyt Project 1200884. D. Jaur\'e acknowledges financial support from Beca de Doctorado Nacional Conicyt. M. M\u antoiu has been supported by the Fondecyt Project 1200884. We are grateful to V. Nistor; due to his advice, the paper improved a lot.


ADDRESS 

\medskip
F. Flores:

\smallskip
Departamento de Ingenier\'ia Matem\'atica, Universidad de Chile, 

Beauchef 851, Torre Norte, Oficina 436,

Santiago, Chile.

E-mail: feflores@dim.uchile.cl

\medskip
D. Jaur\'e:

\smallskip
Facultad de Ciencias, Departamento de Matem\'aticas, Universidad de Chile

Las Palmeras 3425, 

Santiago, Chile.

E-mail: diegojaure@ug.uchile.cl

\medskip
M. M\u antoiu:

\smallskip
Facultad de Ciencias, Departamento de Matem\'aticas, Universidad de Chile

Las Palmeras 3425, Casilla 653, 

Santiago, Chile.

E-mail: mantoiu@uchile.cl


\begin{thebibliography}{1}

\bibitem{Ab} \textsc{F. Abadie}, Envelopping Actions and Takai Duality for Partial Actions, \textit{J. Funct. Anal.}, \textbf{197}, (2003), 14--67.

\bibitem{Au} \textsc{A. Austad}, Spectral Invariance of $^*$-Representations of Twisted Convolution Algebras with Applications in Gabor Analysis, \textit{Journal of Fourier Analysis and Applications}, \textbf{27}, (2021), 56.

\bibitem{AO}  \textsc{A. Austad and E. Ortega}, Groupoids and Hermitian Banach $^*$-Algebras, Internat. J. Math. 33.14 (2022), Paper No. 2250090, 25.

\bibitem{Ba} \textsc{R. Balan}, The Noncommutative Wiener Lemma, Linear Independence and Spectral Properties of the Algebra of Time-Frequency Shift Operators, \textit{Trans. Amer. Math. Soc.} \textbf{360}, (2008), 3921--3941.

\bibitem{Bas} \textsc{A. G. Baskakov}, Wiener's Theorem and Asymptotic Estimates for Elements of Inverse Matrices, \textit{Funktsional. Anal. i Prilozhen. } \textbf{24}, (1990).

\bibitem{BB} \textsc{I. Beltita and D. Beltita}, Inverse-Closed Algebras of Integral Operators on Locally Compact Groups, \textit{ Ann. H. Poincar\'e},  \textbf{16}, (2015), 1283--1306.

\bibitem{BB1} \textsc{I. Beltita and D. Beltita},  Erratum to:"Inverse-Closed Algebras of Integral Operators on Locally Compact Groups", \textit{ Ann. Henri Poincar\'e}, \textbf{16}, (2015), 1307--1309.

\bibitem{BG} \textsc{E. Breuillard and T. Gelander}, On Dense Free Subgroups of Lie Groups, \textit{Journal of Algebra}, \textbf{261}, (2003), 448--467.

\bibitem{Ex1} \textsc{R. Exel}, Twisted Partial Actions: a Classification of Regular $C^*$-Algebraic Bundles,\textit{ Proc. London Math. Soc.} \textbf{74}(3), (1997), 417--443.

\bibitem{Ex3} \textsc{R. Exel}, Amenability for Fell Bundles, \textit{J. Reine Angew. Math.} \textbf{492}, (1997), 41--73.

\bibitem{FD2} \textsc{J. M. G. Fell and R. S. Doran}, Representations of $^*$-Algebras, Locally Compact Groups, and Banach $^*$Algebraic Bundles, \textit{Pure and Applied Mathematics}, \textbf{126}, Academic Press, 1988.

\bibitem{FL} \textsc{G. Fendler and M. Leinert}, On Convolution Dominated Operators, \textit{Integral Equations Operator Theory}, \textbf{86}(2), (2016), 209--230.

\bibitem{FGL0} \textsc{G. Fendler, K. Grochenig and M. Leinert}: \emph{ Symmetry of Weighted $L^1$-algebras and the GRS-condition}, Bull. London Math. Soc. \textbf{38}, 625--635, (2006).

\bibitem{FGL} \textsc{G. Fendler, K. Gr\"ochenig and M. Leinert}, Convolution-Dominated Operators on Discrete Groups, \textit{Integral Equations Operator Theory}, \textbf{61}(4), (2008), 490--509. 

\bibitem{Gr} \textsc{K. Gr\"ochenig}, \textit{Wiener's Lemma: Theme and Variations. An Introduction to Spectral Invariance}, in B. Forster and P. Massopust, editors, Four Short Courses on Harmonic Analysis, Appl. Num. Harm. Anal. Birkh\"auser, Boston, 2010.

\bibitem{GL1} \textsc{K. Gr\"ochenig and M. Leinert}, Wiener's Lemma for Twisted Convolution and Gabor Frames, \textit{J. Amer. Math. Soc.} \textbf{17}, (2004), 1--18.

\bibitem{GL2} \textsc{K. Gr\"ochenig and M. Leinert}, Symmetry and Inverse-Closedness of Matrix Algebras and Functional Calculus for Infinite Matrices, \textit{Trans. of the A.M.S.} \textbf{358}(6), (2006), 2695--2711.

\bibitem{JM} \textsc{D. Jaur\'e and M. M\u antoiu}, Symmetry and Spectral Invariance for Topologically Graded $C^*$-Algebras and Partial Action Systems, Bull. Lond. Math. Soc. 54(4) (2022), 1448--1469.

\bibitem{Ku} \textsc{W. Kugler}, On the Symmetry of Generalized $L^1$-Algebras, \textit{Math. Z.} \textbf{168}(3), (1979), 241--262.

\bibitem{Kum}  \textsc{A. Kumjian}, Fell Bundles over Groupoids, \textit{Proc. Amer. Math. Soc.} \textbf{126}(4), (1998), 1115--1125.

\bibitem{Kur} \textsc{V. G. Kurbatov}, Algebras of Difference and Integral Operators, \textit{Funktsional. Anal. i Prilozhen. }\textbf{24}, (1990), 87--88.

\bibitem{Le} \textsc{H. Leptin}, Symmetrie in Banachschen Algebren, \textit{ Arch. Math. (Basel) }, \textbf{27}(4), (1976), 394--400 .

\bibitem{LP} \textsc{H. Leptin and D. Poguntke}, Symmetry and Nonsymmetry for Locally Compact Groups, \textit{J. Funct. Anal.} \textbf{33}(2), (1979), 119--134.

\bibitem{Ma} \textsc{M. Mantoiu}, Symmetry and Inverse Closedness for Banach $C^*$-Algebras Associated to Discrete Groups, \textit{Banach Journal of Mathematical Analysis}, \textbf{9}(2), (2015), 289--310.

\bibitem{MW} \textsc{P. Muhly and D. Williams}, Equivalence and Disintegration Theorems for Fell Bundles and Their $C^*$-Algebras, \textit{Dissertationes Math.} (Rozprawy Mat.), \textbf{456}, (2008) 1--57.

\bibitem{MW1} \textsc{P. Muhly and D. Williams}, Renault's Equivalence Theorem for Groupoid Crossed Products, \textit{NYJM Monographs}, \textbf{3}, State University of New York University at Albany, Albany, NY, 2008.

\bibitem{Pa2} \textsc{T.W. Palmer}, \textit{Banach Algebras and the General Theory of $^*$-Algebras}, Vol. II. $^*$-Algebras, Encyclopedia of Mathematics and its Applications, \textbf{69}. Cambridge University Press, Cambridge, 2001.

\bibitem{Po} \textsc{D. Poguntke}, Rigidly Symmetric $L^1$-Group Algebras, \textit{Seminar Sophus Lie}, \textbf{2}, (1992), 189--197.                

\bibitem{SW} \textsc{E. Samei and M. Wiersma}, Quasi-Hermitian Locally Compact Groups are Amenable, \textit{Adv. Math.} \textbf{359}, (2020), 106897.

\bibitem{TWG} \textsc{J-L. Tu, P. Wu and C. Laurent-Gengoux}, Twisted K-Theory of Differentiable Stacks, \textit{Ann. Scient. \'Ec. Norm. Sup.}, \textbf{37}, (2004), 841--910.

\bibitem{Yam} \textsc{S. Yamagami}, On the Ideal Structure of $C^*$-Algebras over Locally Compact Groupoids,  in: \emph{Mappings of Operator Algebras} (Philadelphia, PA, 1988), in: Progress in Math., \textbf{84}, Birkh\"auser Boston, Boston, MA, (1990), 199--204.

\bibitem{Wie} \textsc{N. Wiener}, Tauberian Theorems, \textit{Ann. of Math.}  \textbf{33}(1), (1932), 1--100.

\bibitem{Wil} \textsc{D. Williams}, Crossed Products of $C^*$-Algebras, Mathematical Surveys and Monographs, \textbf{134}, American Mathematical Society, 2007.

\bibitem{Wil1} \textsc{D. Williams}, A Tool Kit for Groupoid $C^*$-Algebras, Mathematical Surveys and Monographs, \textbf{241}, American Mathematical Society, Providence, RI, 2019.

\end{thebibliography}
\end{document}